\numberwithin{equation}{section}
\theoremstyle{plain}% default
\newtheorem{theorem}{Theorem}[section]
\newtheorem{proposition}[theorem]{Proposition}
\newtheorem{problem}{Problem}
\theoremstyle{definition}
\theoremstyle{remark}
\newcommand{\norm}[1]{\left\|#1\right\|}
\newcommand{\abs}[1]{\left\vert#1\right\vert}
\newcommand{\spr}[1]{\left\langle\,#1\,\right\rangle}
\newcommand{\kl}[1]{\left(#1\right)}
\newcommand{\Kl}[1]{\left\{#1\right\}}
\definecolor{aoe}{rgb}{0.0, 0.5, 0.0}
\newcommand{\R}{\mathbb{R}}
\newcommand{\yd}{y^{\delta}}
\newcommand{\Ein}{E^{\text{in}}}
\newcommand{\Eout}{E^{\text{out}}}
\newcommand{\Eref}{E_\text{ref}}%{E^{\text{ref}}}
\newcommand{\Pref}{P_\text{ref}}%{P^{\text{ref}}}
\newcommand{\Iref}{I_\text{ref}}%{I^{\text{ref}}}
\newcommand{\DT}{\Delta T}
\newcommand{\OD }{{\Omega_D}}
\newcommand{\OS}{{\Omega_S}}
\newcommand{\Lt}{{L_2}}
\newcommand{\LtOD}{{L_2(\OD)}}
\newcommand{\LtOS}{{L_2(\OS)}}
\newcommand{\LoR}{{L_1(\R)}}
\newcommand{\LtR}{{L_2(\R)}}
\newcommand{\LiR}{{L_\infty(\R)}}
\newcommand{\Linf}{{L_\infty}}
\newcommand{\chisupf}{\chi_{\sup(f)}}
\newcommand{\sitj}{{s_i,\theta_j}}
\renewcommand{\ij}{{i,j}}
\newcommand{\E}{\mathcal{E}}
\newcommand{\foh}{\frac{1}{2}}
\newcommand{\vu}{\boldsymbol{u}}
\title{A mathematical approach towards THz tomography for non-destructive imaging}
\author{
Simon Hubmer\footnote{Johann Radon Institute Linz, Altenbergerstra{\ss}e 69, A-4040 Linz, Austria, (simon.hubmer@ricam.oeaw.ac.at), Corresponding author.},
Alexander Ploier\footnote{Doctoral Program Computational Mathematics, Altenbergerstra{\ss}e 69, A-4040 Linz, Austria, (alexander.ploier@dk-compmath.jku.at)},
Ronny Ramlau\footnote{Johannes Kepler University Linz, Institute of Industrial Mathematics, Altenbergerstra{\ss}e 69, A-4040 Linz, Austria, (ronny.ramlau@jku.at)} \footnote{Johann Radon Institute Linz, Altenbergerstra{\ss}e 69, A-4040 Linz, Austria, (ronny.ramlau@ricam.oeaw.ac.at)}, \\
Peter Fosodeder\footnote{Research Center for Non Destructive Testing GmbH (RECENDT), Altenbergerstra{\ss}e 69, A-4040 Linz, Austria, (peter.fosodeder@recendt.at)}, 
Sandrine van Frank\footnote{Research Center for Non Destructive Testing GmbH (RECENDT), Altenbergerstra{\ss}e 69, A-4040 Linz, Austria, (sandrine.vanfrank@recendt.at)}
}
\begin{document}

% Include the title
\maketitle

% Abstract
\begin{abstract}

In this paper, we consider the imaging problem of terahertz (THz) tomography, in particular as it appears in non-destructive testing.  We derive a nonlinear mathematical model describing a full THz tomography experiment, and consider linear approximations connecting THz tomography with standard computerized  tomography and the Radon transform. Based on the derived models we propose different reconstruction approaches for solving the THz tomography problem, which we then compare on experimental data obtained from THz measurements of a plastic sample. 

\smallskip
\noindent \textbf{Keywords.} Terahertz Tomography, Tomographic Imaging, Non-Destructive Testing, Radon Transform, Inverse and Ill-Posed Problems
% 65J22 - Numerical Analysis - Inverse Problems
% 65J20 - Numerical Analysis - Improperly posed problems; regularization
% 47A52 Operator Theory - Ill-posed problems, regularization
% 78A10 Optics, electromagnetic theory - Physical Optics
\end{abstract}

% % % % % % % % % % % % %
% Start of the sections %
% % % % % % % % % % % % %

% % % % % % % % % % % % % %
% Section - Introduction  %
% % % % % % % % % % % % % %
\section{Introduction}

The plastics industry is a major player in the global economy and has an important role to play in the ecological transition into an energy-efficient and waste-free society. In addition to a higher recycling rate, a better design and more reliable methods for the production of plastic products are critical to reach the Sustainable Development Goals.

In this work, we particularly focus on the quality control of extruded plastic profiles. Presently, only simple geometries like pipes and thin layers can be measured in a safe way with inline sensors \cite{Yahyapour_Jahn_ThicknesswECOPS_2019}. Often, only single points along a profile are evaluated in this way, while more complex geometries are either tested in a destructive way, which results in long feedback times for process control, or with unsafe radiation \cite{Garcea_Wang_Withers_XCTforComposites_2018}. In the latter case, bulky and cost-intensive X-ray scanners are used, which is often undesirable due to human safety concerns and costs. Here, we try to improve the current state-of-the-art by utilizing Terahertz (THz) radiation for an inline tomography measurement on plastic profiles. Since typical polymers exhibit low absorption in a range of frequencies up to several THz \cite{Jin_Kim_Jeon_THzDielectricConstants_2006}, this type of radiation is a highly suitable candidate for performing non-destructive testing in a safe way. Furthermore, newly developed source and detector technology \cite{Dietz_Vieweg_ECOPS_2014} opens up the possibility of designing relatively compact, cheap, fast and robust measurement devices for the use in industrial environments.

Typical THz systems can be classified as continuous wave (cw) or pulsed measurement devices. Depending on the actual use case, both cw THz-CT \cite{Recur_cwTHzCT_2012} and pulsed THz-CT \cite{Mukherjee_Federici_pulsedTHzCT} have been used in the past. While cw systems generally offer more radiation power at relatively low frequencies of several 100 GHz, pulsed systems enable one to directly measure the electric field of THz signals with frequencies up to several THz. In particular, the simultaneous accessibility of amplitude and phase information in the measured pulse opens up a new path for data evaluation and information extraction. In combination with the better diffraction-limited resolution associated with the larger radiation frequency, this advantage convinced us to use a pulsed THz Time-Domain Spectroscopy \cite{Neu_Schmuttenmaer_THzTDS_2018} (THz-TDS) system for our work. 

In this paper, we develop a new imaging model specifically for the use case of THz imaging on plastic profiles, in order to comprehensively make use of the information contained in the measurement signal of the THz-TDS system. Based on physical considerations, our derivation results in a nonlinear model involving the object density function and the Radon transform. Furthermore, we derive linear approximations of this model revealing connections to standard computerized (X-ray) tomography. Based on the derived models, we then consider a number of linear and nonlinear reconstruction approaches for solving the THz tomography problem, and compare their performance on experimental data obtained from Thz measurements of a plastic sample.

The outline of this paper is as follows: In Section~\ref{sect_background} we give some physical background on THz tomography, and discuss the experimental setup which we work with. In Section~\ref{sect_model} we derive mathematical models describing that setup, which then forms the basis of the numerical reconstruction procedures presented in Section~\ref{sect_reconstruction}. The application of the presented methods to experimental data is considered in Section~\ref{sect_numerical_results}, and is followed by a short conclusion given in Section~\ref{sect_conclusion}.

% % % % % % % % % % % % % % % % %
% Section - Physical Background %
% % % % % % % % % % % % % % % % %
\section{Physical background}\label{sect_background}

In this section, we outline the THz CT measurement setup and postulate a THz imaging model based on geometrical optics. Starting from Maxwells equations, several approximations will be introduced, until the final imaging model can be expressed in the form of five simple statements.

% Subsection - Measurement Setup
\subsection{Measurement Setup and Procedure}\label{subsect_Setup}

The schematic drawing in Figure~\ref{fig_ImagingSetup} shows the imaging part of the experimental setup. It shall be noted, that several other components, mostly those relevant for generating and detecting THz radiation, are omitted as they are not directly relevant for understanding the imaging procedure. 

A photoconductive antenna (PCA) is used to transmit THz radiation (Tx in Figure~\ref{fig_ImagingSetup}). By means of two off-axis parabolic mirrors (OPM) the beam of THz radiation is firstly collimated and secondly focussed into the imaging region. The object of interest is mounted on mechanical stages for performing rotary and translatory movement in the focal plane. After interacting with the object, the THz radiation is analogously guided and focussed into a receiving PCA (Rx in Figure~\ref{fig_ImagingSetup}). 

\begin{figure}[h!]
    \centering
    \includegraphics[viewport=0bp 560bp 590bp 790bp,clip,scale=0.7]{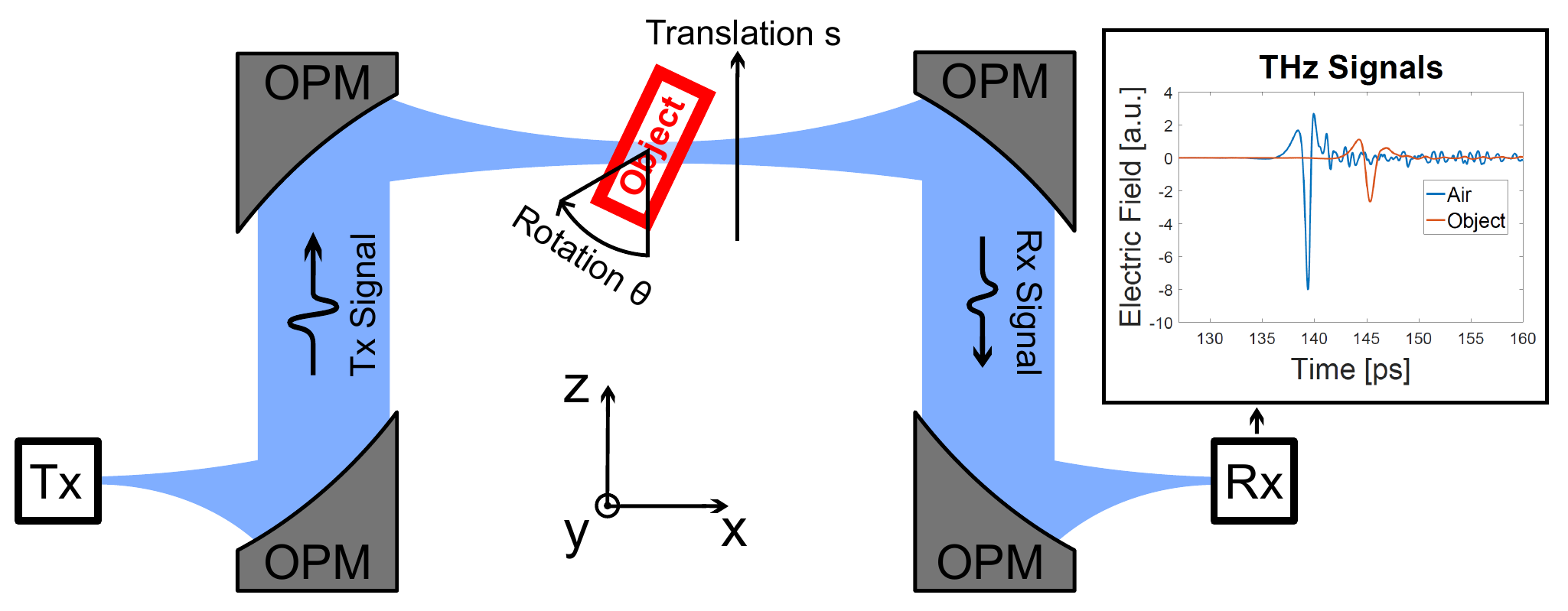}\\
    \includegraphics[scale=0.16]{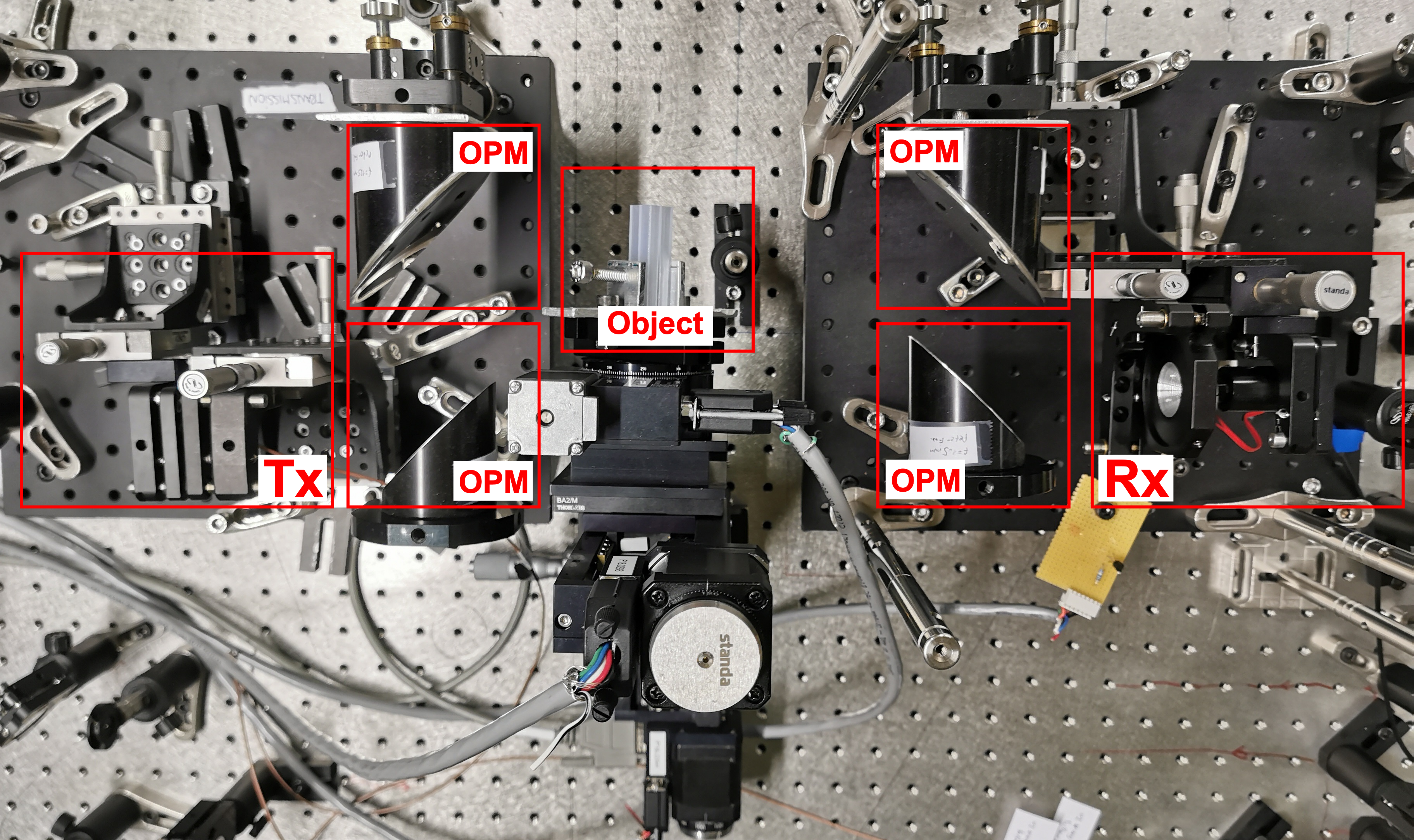}
    \caption{Schematic drawing and image of the measurement setup. The THz radiation is generated by a transmitting antenna (Tx). Two Off-Axis Parabolic Mirrors (OPM) are used to create a focussed THz beam. After interacting with an object, the THz beam is guided to the detecting antenna (Rx) by two OPMs again. Exemplarily, a measured reference signal through air and one measured signal through an object is shown.}
    \label{fig_ImagingSetup}
\end{figure}

Both the emitting and the receiving PCA act as a point-like emitter/detector pair. The transmitting PCA is capable of generating THz pulses with a duration of $\thickapprox$ 5 ps, while the receiving PCA is capable of sampling the electric field of the THz pulse over time. In Figure~\ref{fig_ImagingSetup}, an exemplary reference signal and a signal measured through an object is shown. Two effects, namely absorption and a time delay with respect to the reference signal, are clearly visible and their explanation will be subject to Section~\ref{subsect_ImagingModel}. 

The direct accessibility of amplitude and phase information of the transmitted THz pulses opens up new possibilities for extracting information from the measured signals and modelling the imaging process. Therefore, a customized model for imaging with THz radiation is discussed below.

% Subsection - Imaging Model
\subsection{Imaging Model}\label{subsect_ImagingModel}

In general, the propagation of THz radiation is described by the well-known Maxwells equations \cite{Griffiths_Electrodynamics_2014}. For our work on THz imaging, we have chosen to use a more suitable model for our use case based on the approximation of geometrical optics. Geometrical optics can be understood as a limiting case of Maxwells equations with infinitely large radiation frequency. In this common approximation, radiation is generally modelled in the form of mathematical rays. As a consequence, typical phenomena of electromagnetic wave propagation, such as scattering, are not considered in our model. We further assume that effects caused by refraction are negligible. The assumptions made so far are justified, since our work is specifically dedicated to imaging of plastic profiles with planar surfaces only. Furthermore, the interaction of THz radiation with different media is assumed to be linear, isotropic and frequency independent. While assuming linearity and isotropy of the frequently used plastic materials is common practice, we found that in our experimental setup the frequency dependence of material parameters can be sufficiently approximated by a mean value over a given spectral range. Therefore, the remaining two possible ways for radiation to interact with media are absorption and a change in the propagation speed of light, both described by the scalar refractive index $n$ and absorption coefficient $\alpha$ of the material. Absorption is conveniently described by Lambert-Beers law, while the change in the speed of light simply induces a timeshift in the measured THz signal. 

In order to increase the practical applicability of our imaging model, we include the fact that THz radiation is only reasonably focusable up to focal diameters in the mm-range. This practical limitation is considered by assuming that one focussed THz beam consists of multiple parallel geometrical rays, travelling independently. In our model, these independent rays have the same time dependence, but different amplitudes, based on the focal spot intensity distribution (see Figure~\ref{fig_PSF}).

\begin{figure}[ht!]
    \centering
    \includegraphics[viewport=20bp 180bp 550bp 620bp,clip,scale=0.65]{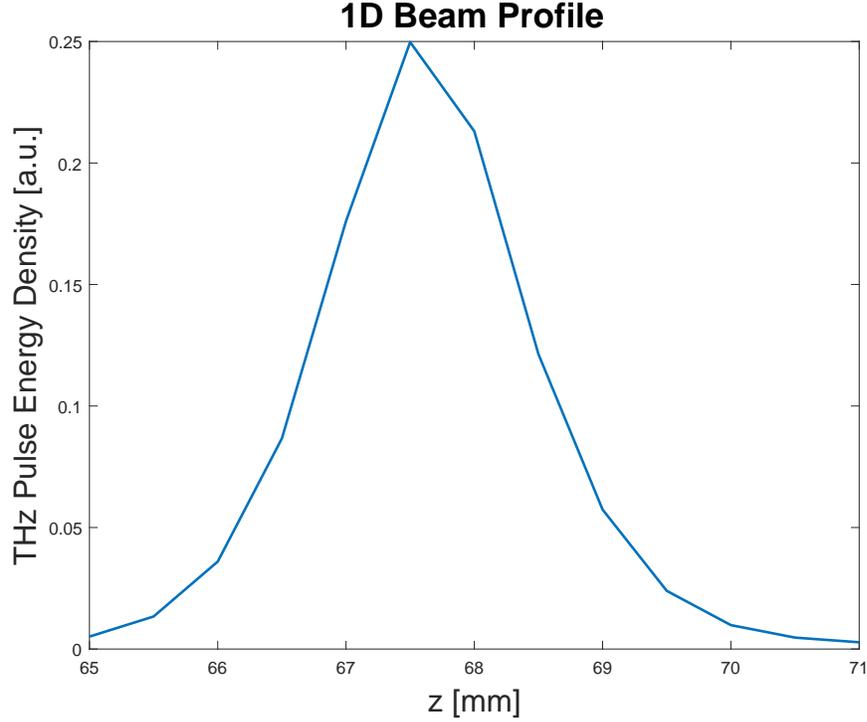}
    \caption{THz beam profile in the focal plane along the parallel scanning direction. The energy density distribution was measured by shifting a rectangular aperture through the focal spot in z-direction.}
    \label{fig_PSF}
\end{figure}

The final assumptions in our proposed imaging model are listed below in five statements, that will act as the basis for futher mathematical derivation of the imaging model in Section~\ref{sect_model}.

\vspace{12pt}
\noindent 
\textbf{Model Assumptions}
\begin{enumerate}
    \item A THz beam behaves like an ensemble of independently travelling parallel rays. The rays have the same time dependence, but different amplitudes based on the focal spot intensity profile.
    \item Each parallel ray experiences damping and a timeshift due to the objects absorption coefficient $\alpha$ and refractive index $n$.  
    \item The timeshift depends on the refractive index $n$ and is directly proportional to the thickness of the penetrated medium.
    \item The transmitted pulse amplitude decreases exponentially with the thickness of the penetrated medium and $\alpha$  (Lambert-Beers law).
    \item Scattering and refraction effects inside the object are negligible.
\end{enumerate}

% % % % % % % % % % % % % % % % %
% Section - Mathematical Model  %
% % % % % % % % % % % % % % % % %
\section{Mathematical modelling of THz tomography}\label{sect_model}

In this section, we derive mathematical models of THz tomography for the specific setting and physical assumptions described above. These are then used as the starting point for associated reconstruction processes presented in Section~\ref{sect_reconstruction} below. As we shall see, the derived  models are closely related to the Radon transform \cite{Natterer_2001,Louis_1989}, which we thus now briefly review in the following section.

% Subsection - The Radon transform
\subsection{The Radon transform}

The Radon transform \cite{Natterer_2001,Louis_1989} maps a density (absorption) function $f(x,y)$ to its line integrals, i.e.,
    \begin{equation}\label{def_Radon}
        (R f)(s,\theta) = \int_\R f(s \vu(\theta) + \sigma  \vu(\theta)^\perp) \, d \sigma \,,
    \end{equation}
where $\vu(\theta) = (\cos(\theta),\sin(\theta))^T$ for different angles $\theta \in [0,2\pi)$. In X-ray tomography, the Radon transform provides a connection between the loss of intensity of an X-ray passing through an object and its density function $f$. In particular, if $I_0(s,\theta)$ and $I(s,\theta)$ denote the initial- and the measured intensity of the X-ray corresponding to the line $L(s,\theta) = L(s\vu(\theta) + \sigma \vu(\theta)^\perp)$, respectively, then
    \begin{equation*}
        I(s,\theta) = I_0(s,\theta) \exp\kl{-(Rf)(s,\theta)} \,.
    \end{equation*}
After rearranging the terms, we obtain the linear operator equation 
    \begin{equation}\label{eq_Radon_Xray}
        (Rf)(s,\theta) = - \log\kl{I(s,\theta)/I_0(s,\theta)} \,,
    \end{equation}
which typically serves as the basis of numerical reconstruction approaches.

% Subsection - Nonlinear Model
\subsection{Nonlinear THz tomography model}

In THz tomography, we do not measure intensities $I(s,\theta)$ but time-dependent electric fields $E(t; s, \theta)$. Furthermore, we do not measure this field for all $(s,\theta)$ but only for certain pairs $(s_i,\theta_j)$. Hence, in the future we use subscripts $i,j$ to emphasize the dependence of quantities on $(s_i,\theta_j)$, e.g., we write $E_\ij(t)$  instead of $E(t; \sitj)$ etc. 

Now, as we saw in the previous section, the fields $E_\ij$ are the result of fields $\Ein_\ij(t,s)$ sent through the object, and of a subsequent focusing of the resulting fields $\Eout_\ij(t,s)$ on a single-pixel detector. Mathematically, this can be described by 
    \begin{equation*}
        E_\ij(t) = \int_{\R} \Eout_\ij(t,s) \, ds \,. 
    \end{equation*}
Next, we need to relate the fields $\Eout_\ij$ to the input fields $\Ein_\ij$. Due to our above assumptions, we can interpret these fields as bundles of rays, each of which experiences both a delay $\DT_\ij(f,s)$ and a damping $D_\ij(f,s)$ caused by the medium, which results in
    \begin{equation}\label{eq_E_Ein_Delay_Damping}
        \Eout_\ij(t,s) = \Ein_\ij(t - \DT_\ij(f,s), s)\times D_\ij(f,s) \,.
    \end{equation}
Since we assumed that the loss of energy is directly proportional to the density $f$ of the medium, we can argue in the same way as for for X-ray tomography \cite{Natterer_2001,Louis_1989} to obtain
    \begin{equation*}
        D_\ij(f,s) =  \exp\kl{-\frac{1}{2}(Rf)(s,\theta_j)}\,,
    \end{equation*}   
where the factor $1/2$ appears for scaling purposes. Together with \eqref{eq_E_Ein_Delay_Damping} this yields
    \begin{equation}\label{eq_E_Ein_Delay}
        E_\ij(t) = \int_{\R} \Ein_\ij(t-\DT_\ij(f,s),s) \exp\kl{-\frac{1}{2}(Rf)(s,\theta_j)} \, ds \,.
    \end{equation}
Furthermore, due to the experimental setup (see Figure~\ref{fig_PSF} and \ref{fig_ImagingSetup}), there holds
    \begin{equation*}
        \Ein_\ij(t,s) = w(s-s_i) \Eref(t) \,,
    \end{equation*}
where $\Eref(t)$ is a reference field and $w(s)$ is a weight function, both of which can be determined experimentally. Combining this with \eqref{eq_E_Ein_Delay} we obtain
    \begin{equation}\label{eq_E_Eref_Delay}
        E_\ij(t) = \int_{\R} w(s-s_i) \Eref(t-\DT_\ij (f,s))  \exp\kl{-\frac{1}{2}(Rf)(s,\theta_j)} \, ds \,.
    \end{equation}
Now, it is also possible to derive explicit expressions for the delay (see below). However, we can also get rid of it by integrating the above equation with respect to time, which yields
    \begin{equation}\label{eq_E_int}
    \begin{split}
        \int_{-\infty}^{\infty} E_\ij(t) \, dt = \int_{-\infty}^{\infty} \Eref(t) \, dt \int_{\R} w(s-s_i)  \exp\kl{-\frac{1}{2}(Rf)(s,\theta_j)} \, ds  \,.
    \end{split}
    \end{equation}
Hence, if we define the quantities
    \begin{equation*}
        P_\ij := \int_{-\infty}^{\infty} E_\ij(t) \, dt \,,
        \qquad
        \text{and}
        \qquad
        \Pref := \int_{-\infty}^{\infty} \Eref(t) \, dt \,,
    \end{equation*}
then \eqref{eq_E_int} can be written in the form
    \begin{equation}\label{eq_f_nonlin}
        \Pref \int_{\R} w(s-s_i)  \exp\kl{-\frac{1}{2}(Rf)(s,\theta_j)} \, ds  = P_\ij  \,,
    \end{equation}
and thus we arrive at the following

\begin{problem}\label{prob_F}
Given the data $P_\ij$ and $\Pref$, as well as the weight function $w(s)$, the full-beam THz tomography problem consists in finding the density function $f$ as the solution of the nonlinear system of equations \eqref{eq_f_nonlin}. 
\end{problem}

% Subsection - Linear Models
\subsection{Linear THz tomography models}

In practical applications, the weight function $w(s)$ often takes the shape of a Gaussian function centered at $0$, which becomes narrower the shorter the wavelength and the larger the numerical aperture. Hence, it is not unreasonable to formally approximate $w(s)$ by the delta distribution, which corresponds to the case that the field $\Ein_\ij(t,s)$ consists only of a single ray (at $s=0$). With this, equation \eqref{eq_E_Eref_Delay} simplifies to 
    \begin{equation}\label{eq_E_single}
        E_\ij(t) =  \Eref(t-\DT_\ij(f,s_i))  \exp\kl{-\frac{1}{2}(Rf)(s_i,\theta_j)}  \,.
    \end{equation}   
Now we can proceed in two ways. On one hand, we can again integrate the equation with respect to time, which yields 
    \begin{equation}\label{eq_Radon_exp_P}
        P_\ij =  \Pref  \exp\kl{-\frac{1}{2}(Rf)(s_i,\theta_j)}  \,.
    \end{equation}  
which after rearranging becomes
    \begin{equation}\label{eq_Radon_P}
        (Rf)(s_i,\theta_j) = -2 \log\kl{P_\ij/\Pref} \,.
    \end{equation}
This leads us to 

\begin{problem}\label{prob_R_P}
Given the data $P_\ij$ and $\Pref$, the single-ray THz tomography problem consists in finding the density function $f$ by solving the system of equations \eqref{eq_Radon_P}. 
\end{problem}

Alternatively, we can first square equation \eqref{eq_E_single} before integrating it, which yields
    \begin{equation*}
        \int_{-\infty}^{\infty} E_\ij(t)^2 \, dt =  \exp\kl{-(Rf)(s_i,\theta_j)} \int_{-\infty}^{\infty} \Eref(t)^2 \, dt   \,.
    \end{equation*} 
Now if we define the quantities
    \begin{equation*}
        I_\ij := \int_{-\infty}^{\infty} E_\ij(t)^2 \, d t \,,
        \qquad 
        \text{and}
        \qquad
        \Iref := \int_{-\infty}^{\infty} \Eref(t)^2 \, d t \,,
    \end{equation*}
which are exactly the intensities of the fields $E_\ij$ and $\Eref$, respectively, we obtain
    \begin{equation*}
        I_\ij =  \exp\kl{-(Rf)(s_i,\theta_j)} \Iref \,.
    \end{equation*}   
Rearranging this equation we get
    \begin{equation}\label{eq_Radon_I}
        (Rf)(s_i,\theta_j) = - \log\kl{I_\ij/\Iref} \,,
    \end{equation}
which is exactly \eqref{eq_Radon_Xray} with $s=0$. Hence, our THz tomography model can be seen as a specific generalization of the X-ray Radon transform model. This now leads us to 

\begin{problem}\label{prob_R_I}
Given the data $I_\ij$ and $\Iref$, the single-ray intensity THz tomography problem consists in finding the density function $f$ by solving the system of equations \eqref{eq_Radon_I}. 
\end{problem}

Note that while Problem~\ref{prob_F} is a nonlinear problem, both Problem~\ref{prob_R_P} and Problem~\ref{prob_R_I} are linear. Each of is used as basis for an associated reconstruction process presented in Section~\ref{sect_reconstruction} below. 

% Subsection - Single Material
\subsection{Nonlinear model for uniform samples}
  
In case that the scanned sample only consists of a single material and air, the density function $f$ only takes two values. These are $0$ for air and $\alpha_M \neq 0$ for the material. In this case the delay $\DT_\ij(f,s)$ can be computed explicitly via the kinematic equation 
    \begin{equation*}
        \text{time} = \text{distance}/\text{velocity} 
    \end{equation*}
as follows: Consider a ray passing through the object along a line corresponding to $(s,\theta_j)$. The total distance which this ray travels inside material is then given by
    \begin{equation*}
        \text{distance} =  \int_\R \chisupf(s\vu(\theta_j) + \sigma \vu(\theta_j)^\perp) \, d \sigma \,,
    \end{equation*} 
where $\chisupf$ denotes the indicator function of the support of $f$. Denoting by $c_0$ the speed of light in vacuum and by $n$ the refractive index of the material, it follows that the total time which the ray travels inside material is given by
    \begin{equation*}
        \text{time} = \text{distance}/\text{velocity} 
        =
        \frac{n}{c_0}\int_\R \chisupf(s\vu(\theta_j) + \sigma \vu(\theta_j)^\perp) \, d\sigma \,.
    \end{equation*} 
If instead the ray had travelled the same distance through air, the time which this would have taken can be calculated the same way but with $n$ replaced by the refractive index of air $n_0$. Thus, the delay $\DT_\ij(f,s)$ due to the ray traveling through material is
    \begin{equation*}
        \DT_\ij(f,s) = \frac{n - n_0}{c_0} \int_\R \chisupf(s\vu(\theta_j) + \sigma \vu(\theta_j)^\perp) \, d\sigma =   \frac{n - n_0}{\alpha_M c_0} (Rf)(s,\theta_j) \,. 
    \end{equation*}
Now, combining this with \eqref{eq_E_Ein_Delay} we obtain
    \begin{equation}\label{eq_E_Eref_special}
        E_\ij(t) = \int_{\R} w(s-s_i) \Eref\kl{t-\tfrac{n - n_0}{\alpha_M c_0} (Rf)(s,\theta_j)}  \exp\kl{-\frac{1}{2}(Rf)(s,\theta_j)} \, ds \,,
    \end{equation}
which leads us to the following 

\begin{problem}\label{prob_F_E}
Given the data $E_\ij(t)$ and $\Eref(t)$, the time-dependent full-beam THz tomography problem for uniform material samples consists in finding the density function $f$ which satisfies \eqref{eq_E_Eref_special}. 
\end{problem}

Note that the structure of the sample, more precisely the fact that it is composed from a single material, directly enters into Problem~\ref{prob_F_E}. For samples composed of multiple different materials, similar formulas can be derived, which however lead to much more complicated mathematical models, which are out of scope of the present paper.

% % % % % % % % % % % % % % % % % % % %
% Section - Reconstruction Approaches %
% % % % % % % % % % % % % % % % % % % %
\section{Reconstruction approaches}\label{sect_reconstruction}

In this section, we consider various reconstruction approaches for obtaining solutions to the Problems~\ref{prob_F}-\ref{prob_R_I} introduced above. For this, we differentiate between the nonlinear Problem~\ref{prob_F} and the linear Problems~\ref{prob_R_P} and \ref{prob_R_I}, which are based on the same operator and can thus be treated in a similar way. Since each of those problems is ill-posed, regularization plays a crucial role in all of the following considerations \cite{Engl_Hanke_Neubauer_1996}.

% Subsection - Reconstruction for Nonlinear Model
\subsection{Reconstruction approach for the nonlinear model}\label{subsect_NonlinearReconstruction}

In this section, we derive a reconstruction approach for solving Problem~\ref{prob_F} based on nonlinear Landweber iteration. We start by defining the nonlinear operator
    \begin{equation*}
        G(f)(s,\theta) := \int_{\R} w(r-s)  \exp\kl{-\frac{1}{2}(Rf)(r,\theta)} \, d r \,.
    \end{equation*}
With this, equation \eqref{eq_f_nonlin} can be written in the form
    \begin{equation}\label{Gfi=g}
        G(f)(\sitj) = P_\ij/\Pref \,,
    \end{equation}
which is a discrete version of the nonlinear operator equation
    \begin{equation}\label{Gf=g}
        G(f) = g \,,
    \end{equation}
where the right-hand side $g$ is defined by
    \begin{equation*}
        g(s,\theta) := \kl{\int_{-\infty}^{\infty} E(t; s,\theta) \, dt }\Big/\kl{ \int_{-\infty}^{\infty}\Eref(t) \, dt }\,.
    \end{equation*}
%Similarly as \eqref{Rf=g} for the linear Radon transform problem, 
Equation \eqref{Gf=g} models the situation that measurements of the electric fields $E$ and $\Eref$ are available for all parallel lines $s$ and angles $\theta$. Thus, a reconstruction approach for Problem~\ref{prob_F} can be obtained by discretizing a solution approach for the continuous equation \eqref{Gf=g}. Hence, we now turn our attention to \eqref{Gf=g}. 

First of all, we need to fix suitable definition and image spaces for $G$. Without loss of generality we can assume that the density function $f$ has compact support within the domain $\OD := \{x \in \R^2 \, \vert \, \abs{x} \leq 1\}$. Now if in addition we define the domain $\OS := [-1,1]\times[0,2\pi)$, then it is known \cite{Natterer_2001,Louis_1989} that
    \begin{equation*}
        R \, : \, \LtOD \to \LtOS \,,
    \end{equation*}
is a bounded linear operator. Furthermore, for any $f$ with compact support within $\OD$ there holds $(Rf)(s,\theta) = 0$ for all $\abs{s} > 1$. Hence, its natural extension 
    \begin{equation*}
        R \, : \, \LtOD \to \Lt(\R\times[0,2\pi)) \,,
    \end{equation*}
is a bounded linear operator as well. Unfortunately, these results do not hold for the operator $G$, due to the presence of the exponential function. A similar problem also has been encountered with the attenuated Radon transform \cite{Natterer_2001,Louis_1989}, which in dependence of the so-called attenuation function $\mu$ is defined by
    \begin{equation*}
        (R_\mu f)(s,\theta) = \int_\R f(s\vu(\theta) + \sigma \vu(\theta)^\perp) \exp\kl{-\int_{\sigma}^\infty \mu(s\vu(\theta) + \rho \, \vu(\theta)^\perp) \, d\rho} \, d\sigma \,.
    \end{equation*}
A popular remedy proposed in \cite{Dicken_1999,Dicken_1998} is based on the fact that for realistic density functions $f$ there holds $f \geq 0$ and consequently also $R f \geq 0$. Hence, the exponential function can be replaced by a function $\E \in C^2(\R)$, which is such that
    \begin{equation*}
        \E(x) = \exp(-x) \,, \qquad \forall \, x \in \R_0^+ \,,
    \end{equation*}
and for which $\abs{\E}$, $\abs{\E'}$, and $\abs{\E''}$ are bounded. With this, we can define the operator
    \begin{equation}\label{def_F}
    \begin{split}
        F \, : \, \LtOD &\to \LtOS
        \\
        f &\mapsto F(f)(s,\theta) := \int_{\R} w(r-s)  \E\kl{\foh(Rf)(r,\theta)} \, dr \,,
    \end{split}
    \end{equation}
which coincides with $G$ for any $f \geq 0$, and instead of \eqref{Gf=g} consider the equation
    \begin{equation}\label{Ff=g}
        F(f) = g \,.
    \end{equation}
For the operator $F$ it is now possible to prove the following well-definedness result:
\begin{proposition}
For any $w \in \LoR$ the operator $F$ given in \eqref{def_F} is well-defined. 
\end{proposition}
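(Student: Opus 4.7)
The plan is to show that for any $f \in \LtOD$ the pointwise definition of $F(f)(s,\theta)$ makes sense for a.e.\ $(s,\theta) \in \OS$, that the resulting function is measurable on $\OS$, and that it is in fact uniformly bounded — from which membership in $\LtOS$ follows immediately because $\OS$ has finite Lebesgue measure $4\pi$.

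First I would set up measurability. Since $f \in \LtOD$ and $R : \LtOD \to \Lt(\R \times [0,2\pi))$ is a bounded linear operator (the natural extension recalled just above the statement), $Rf$ is a measurable function on $\R \times [0,2\pi)$. Because $\E \in C^2(\R)$, the composition $(r,\theta) \mapsto \E\!\left(\tfrac{1}{2}(Rf)(r,\theta)\right)$ is measurable, and $(r,s) \mapsto w(r-s)$ is measurable as the composition of the measurable function $w$ with a continuous map. Hence the integrand in \eqref{def_F} is jointly measurable in $(r,s,\theta)$.

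Next I would exploit the two key boundedness properties. By the hypothesis on $\E$ there is a constant $M > 0$ with $|\E(x)| \le M$ for all $x \in \R$, so
\begin{equation*}
\bigl| w(r-s)\, \E\!\left(\tfrac{1}{2}(Rf)(r,\theta)\right) \bigr| \le M\,|w(r-s)| \,, \qquad \forall\, r \in \R \,.
\end{equation*}
Since $w \in \LoR$, translation invariance of Lebesgue measure gives $\int_\R |w(r-s)|\,dr = \norm{w}_{\LoR}$ for every $s$. Thus, for every $(s,\theta) \in \OS$, the integrand is absolutely integrable in $r$, so the pointwise integral defining $F(f)(s,\theta)$ converges. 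By Tonelli/Fubini the map $(s,\theta) \mapsto F(f)(s,\theta)$ is measurable on $\OS$, and satisfies the uniform estimate
\begin{equation*}
|F(f)(s,\theta)| \,\le\, M\, \norm{w}_{\LoR} \,, \qquad \forall\,(s,\theta) \in \OS \,.
\end{equation*}

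Finally, because $\OS = [-1,1]\times[0,2\pi)$ has finite measure, this uniform bound yields
\begin{equation*}
\norm{F(f)}_{\LtOS}^{2} \,\le\, 4\pi\, M^{2}\,\norm{w}_{\LoR}^{2} \,<\, \infty \,,
\end{equation*}
so $F(f) \in \LtOS$ as required. I do not anticipate a genuine obstacle here: the only subtle point is that $\E$ is not integrable on $\R$ (indeed $\E(0)=1$ and $(Rf)(r,\theta)=0$ for $|r|>1$, so $\E(\tfrac12 Rf)$ equals $1$ on a set of infinite measure), which means one cannot bound $F(f)$ by putting the $L_1$-norm on $\E$ and the $L_\infty$-norm on $w$. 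The correct pairing — $L_\infty$ on $\E$ against $L_1$ on $w(\cdot-s)$ — is what makes the argument work, and it is exactly this pairing that the hypotheses of the proposition have been tailored to.
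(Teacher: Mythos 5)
Your argument is correct and is essentially the paper's own proof: the key estimate is the same pairing of $\norm{\E}_{\Linf(\R)}$ with $\norm{w}_\LoR$ via $\int_\R \abs{w(r-s)}\,dr = \norm{w}_\LoR$, followed by integration over the finite-measure domain $\OS$, yielding the identical bound $\norm{F(f)}_\LtOS^2 \leq 4\pi \norm{w}_\LoR^2 \norm{\E}_{\Linf(\R)}^2$. Your additional measurability discussion is a careful elaboration of a step the paper leaves implicit, not a different route.
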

\begin{proof}
Since from the definition of $F$ there follows
    \begin{equation*}
    \begin{split}
        \norm{F(f)}_\LtOS^2 = \int_{-1}^1 \int_0^{2\pi}  \kl{\int_{\R} w(r-s)  \E\kl{\foh(Rf)(r,\theta)} \, dr}^2 \, d\theta \, ds 
       \\
       \leq 4 \pi \norm{w}_\LoR^2   \norm{\E}_{\Linf(\R)}^2  \,,
    \end{split}
    \end{equation*}
the statement follows from the assumption that $w \in \LoR$ and that $\abs{\E}$ is bounded. 
\end{proof}

For our reconstruction approach, we need the following
\begin{proposition}
For any $w \in \LoR \cap \LtR$ the operator $F: \LtOD \to \LtOS$ defined in \eqref{def_F} is continuously Fr\'echet differentiable with
    \begin{equation}\label{def_DF}
        (F'(f)h)(s,\theta) :=  \foh \int_{\R} w(r-s)  \E'\kl{\foh(Rf)(r,\theta)} (Rh)(r,\theta)    \, dr \,.
    \end{equation} 
\end{proposition}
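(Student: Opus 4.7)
The plan is to take $T_f$ to be the operator defined by the right-hand side of \eqref{def_DF}, verify that $T_f : \LtOD \to \LtOS$ is linear and bounded, establish the remainder estimate $\norm{F(f+h) - F(f) - T_f h}_\LtOS = \Landauo(\norm{h}_\LtOD)$, and finally prove Lipschitz continuity of $f \mapsto T_f$ in operator norm. Throughout I would view the $r$-integral in \eqref{def_DF} as a convolution in $s$ and apply Young's convolution inequality; crucially, the two endpoints $w \in \LoR$ and $w \in \LtR$ will each be needed for a different part of the argument, which is exactly why the hypothesis is $w \in \LoR \cap \LtR$.

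Linearity of $T_f$ in $h$ is immediate from linearity of $R$. For boundedness I would write $(T_f h)(s,\theta) = \foh (\tilde w *_s \phi_\theta)(s)$ with $\tilde w(\cdot) = w(-\cdot)$ and $\phi_\theta(r) = \E'(\foh (Rf)(r,\theta))(Rh)(r,\theta)$, use $\abs{\phi_\theta} \leq \norm{\E'}_\Linf \abs{Rh(\cdot,\theta)}$, and apply Young in the form $\norm{\tilde w * \phi_\theta}_\LtR \leq \norm{w}_\LoR \norm{\phi_\theta}_\LtR$. Combined with the chord estimate $\norm{Rh(\cdot,\theta)}_\LtR^2 \leq 2\norm{h}_\LtOD^2$ (which follows from Cauchy--Schwarz applied on the chord of the unit disk where the integrand is supported), integrating over $\theta \in [0,2\pi)$ gives $\norm{T_f h}_\LtOS \leq C_1 \norm{w}_\LoR \norm{h}_\LtOD$.

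For Fréchet differentiability, I would apply Taylor's theorem to $\E$ in the form $\E(a+b) - \E(a) - \E'(a)b = \foh \E''(\xi) b^2$, pointwise with $a = \foh (Rf)(r,\theta)$ and $b = \foh (Rh)(r,\theta)$. Substituting into $F(f+h) - F(f) - T_f h$ and using boundedness of $\E''$ produces the pointwise bound $\abs{(F(f+h) - F(f) - T_f h)(s,\theta)} \leq C_2 \int_\R \abs{w(r-s)} (Rh)^2(r,\theta) \, dr$. At this point I would switch to the other endpoint of Young, namely $\norm{w *_s (Rh)^2(\cdot,\theta)}_\LtR \leq \norm{w}_\LtR \norm{(Rh)(\cdot,\theta)}_\LtR^2$, and invoke the chord estimate a second time to conclude $\norm{F(f+h) - F(f) - T_f h}_\LtOS \leq C_3 \norm{w}_\LtR \norm{h}_\LtOD^2$, which is $\Landauo(\norm{h}_\LtOD)$ as required.

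For continuity of $f \mapsto T_f$, I would use the Lipschitz bound $\abs{\E'(a_1) - \E'(a_2)} \leq \norm{\E''}_\Linf \abs{a_1 - a_2}$ so that $\abs{(T_{f_1} - T_{f_2})h(s,\theta)} \leq C_4 \int_\R \abs{w(r-s)}\,\abs{R(f_1-f_2)(r,\theta)}\,\abs{Rh(r,\theta)}\,dr$. An inner Cauchy--Schwarz in $r$ combined with the chord estimate bounds the $\LoR$-norm of the product $\abs{R(f_1-f_2)}\abs{Rh}$ by $2\norm{f_1-f_2}_\LtOD \norm{h}_\LtOD$, and then Young again (with $\norm{w}_\LtR$) yields $\norm{T_{f_1} - T_{f_2}}_{\LtOD \to \LtOS} \leq C_5 \norm{w}_\LtR \norm{f_1 - f_2}_\LtOD$. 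The main obstacle is not any deep analytic difficulty but the careful bookkeeping of which endpoint of Young's inequality and which norm of $w$ is invoked at each step, together with keeping track of the factors of $\foh$ and the constants from the chord estimate.
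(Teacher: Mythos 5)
Your proposal is correct, and its skeleton (candidate operator $T_f$, boundedness, Taylor remainder of order $\norm{h}^2$) matches the paper's; the differences lie in the estimates and in one added step. Where you invoke Young's convolution inequality at both endpoints --- $L_1 * L_2 \to L_2$ with $\norm{w}_\LoR$ for boundedness of $T_f$, and $L_2 * L_1 \to L_2$ with $\norm{w}_\LtR$ for the remainder and the Lipschitz bound --- the paper uses Cauchy--Schwarz in $r$ throughout, so its differentiability proof only ever consumes $\norm{w}_\LtR$ (the $\LoR$ assumption is needed there only for the earlier well-definedness result); your reading that the intersection hypothesis is forced by two different endpoints of Young is a feature of your route, not of the paper's. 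In the remainder estimate the paper passes through $\norm{Rh}_{L_4(\OS)}$ and compares it to $\norm{Rh}_\LtOS$, a step that as written needs repair (on a finite measure space H\"older runs in the other direction), whereas your uniform-in-$\theta$ chord bound $\norm{Rh(\cdot,\theta)}_{\LtR}^2 \leq 2\norm{h}_\LtOD^2$ bypasses this entirely and yields the same quadratic bound $\norm{F(f+h)-F(f)-T_fh}_\LtOS \lesssim \norm{w}_\LtR\norm{\E''}_\LiR\norm{h}_\LtOD^2$ more directly. Finally, you explicitly prove Lipschitz continuity of $f \mapsto F'(f)$ via the Lipschitz bound on $\E'$; the paper stops after identifying the derivative and never addresses the ``continuously'' part of the claim, so your argument is in this respect more complete. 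One bookkeeping remark: your Lagrange form $\E(a+b)-\E(a)-\E'(a)b = \foh\E''(\xi)b^2$ is fine since $\E \in C^2$, and together with $b = \foh (Rh)(r,\theta)$ it reproduces the same constants the paper obtains from its integral remainder formula.
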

\begin{proof}
For any $f \in \LtOD$ we define the linear operator
    \begin{equation*}
        (A(f)h)(s,\theta) := \foh \int_{\R} w(r-s)  \E'\kl{\foh(Rf)(r,\theta)} (Rh)(r,\theta)    \, dr \,.
    \end{equation*}
Now since for any $h \in \LtOD$ there holds
    \begin{equation*}
    \begin{split}
        & \norm{A(f)h}_\LtOS^2 = \int_{-1}^1 \int_0^{2\pi}  \kl{\foh \int_{\R} w(r-s)  \E'\kl{\foh(Rf)(r,\theta)} (Rh)(r,\theta)\, dr}^2 \, d\theta \, ds 
       \\
       & \qquad \leq
       \frac{1}{4} \norm{\E'}_{\LiR}^2 \int_{-1}^1 \int_0^{2\pi}  \int_{\R} \abs{w(r-s) }^2 \,d r  \int_{\R} \abs{(Rh)(r,\theta)}^2 \, dr \, d\theta \, ds 
       \\
       &\qquad =
       \foh \norm{w}_\LtR^2 \norm{\E'}_{\LiR}^2   \norm{Rh}_{\LtOS}^2 \,,
    \end{split}
    \end{equation*}
the boundedness of $R$ and $\abs{\E'}$ together with $w \in \LtR$ implies that
    \begin{equation*}
        \norm{A(f)h}_\LtOS^2 \leq
       \foh \norm{w}_\LtR^2 \norm{\E'}_{\LiR}^2   \norm{Rh}_{\LtOS}^2
       \leq C \norm{h}_\LtOD^2 \,,
    \end{equation*}
and thus $A(f)h : \LtOD \to \LtOS$ is a bounded linear operator. Hence, it remains to show that $A(f)h$ is in fact the Fr\'echet derivative of $F$. For this, we start by looking at
    \begin{equation}\label{eq_helper_1}
    \begin{split}
        &\kl{F(f+h)-F(f) -A(f)h}(s,\theta)
        =  \int_{\R} w(r-s)  \E\kl{\foh(Rf)(r,\theta) + \foh(Rh)(r,\theta)} \, dr 
        \\
        & \quad - \int_{\R} w(r-s) \E\kl{\foh(Rf)(r,\theta)} \, dr - \int_{\R} w(r-s)  \foh \E'\kl{\foh(Rf(r,\theta))} (Rh)(r,\theta) \, dr
    \end{split}
    \end{equation}
Now since $\E$ is twice continuously differentiable, for any $a,b \in \R$ there holds
    \begin{equation*}
        \abs{\E(a+b) - \E(a) - \E'(a)b} = \abs{\foh \int_a^{a+b}(a+b-t) \E''(t)  \, dt } 
        \leq \frac{1}{4} \norm{\E''}_\LiR \abs{b}^2 \,.
    \end{equation*} 
Using this together with the choice
    \begin{equation*}
        a = \foh(Rf)(r,\theta)\,,
        \qquad \text{and} \qquad
        b = \foh(Rh)(r,\theta)\,,
    \end{equation*}
it thus follows from \eqref{eq_helper_1} that
    \begin{equation*}
    \begin{split}
        \abs{F(f+h)-F(f) -A(f)h }(s,\theta) 
        &\leq  \frac{1}{4} \norm{\E''}_\LiR \int_{\R} \abs{w(r-s) } \abs{\foh Rh(r,\theta)}^2 \, dr \,,
    \end{split}
    \end{equation*}
which implies
    \begin{equation*}
    \begin{split}
        &\norm{F(f+h)-F(f) -A(f)h}_\LtOS^2 
        =
        \int_{-1}^1 \int_0^{2\pi}  \abs{F(f+h)-F(f) -A(f)h }^2 \, d\theta \, ds 
        \\
        & \qquad \leq 2^{-8}\norm{\E''}_\LiR^2
        \int_{-1}^1 \int_0^{2\pi} \kl{  \int_{\R} \abs{w(r-s) }  \abs{Rh(r,\theta)}^2 \, dr }^2 \, d\theta \, ds \,.
    \end{split}
    \end{equation*}
Since by the Cauchy-Schwarz inequality we have
    \begin{equation*}
    \begin{split}
        & \int_{-1}^1 \int_0^{2\pi} \kl{  \int_{\R} \abs{w(r-s) }  \abs{Rh(r,\theta)}^2 \, dr }^2 \, d\theta \, ds 
        \leq 
        2 \norm{w}_\LtR^2  \int_0^{2\pi} \int_{\R}  \abs{Rh(r,\theta)}^4 \, dr  \, d\theta \,,
    \end{split}
    \end{equation*}
and since there holds
    \begin{equation*}
        \int_0^{2\pi} \int_{\R}  \abs{Rh(r,\theta)}^4 \, dr  \, d\theta
        =
        \int_0^{2\pi} \int_{-1}^1  \abs{Rh(r,\theta)}^4 \, dr \, d\theta
        = \norm{Rh}_{L_4(\OS)}^4 \,,
    \end{equation*}
it follows that
    \begin{equation*}
        \norm{F(f+h)-F(f) -A(f)h}_\LtOS^2 
        \leq 
        2^{-7}\norm{w}_\LtR^2 \norm{\E''}_\LiR^2  \norm{Rh}_{L_4(\OS)}^4
    \end{equation*}
Combining this with the fact that 
    \begin{equation*}
        \norm{Rh}_{L_4(\OS)} \leq \abs{\OS}^{-\frac{1}{4}} \norm{Rh}_\LtOS
        \leq 
        \kl{4\pi}^{-\frac{1}{4}} \norm{R} \norm{h}_\LtOD \,,
    \end{equation*}
we obtain 
    \begin{equation*}
        \norm{F(f+h)-F(f) -A(f)h}_\LtOS
        \leq 
        \kl{2^9 \pi}^{-1/2} \norm{w}_\LtR \norm{\E''}_\LiR  \norm{R}^2 \norm{h}_\LtOD^2 \,.
    \end{equation*}
Together with the boundedness of $R$ and $\abs{\E''}$, and since $w\in\LtR$, this implies that $A(f)h$ is the Fr\'echet derivative of $F$, which yields the assertion. 
\end{proof}

Next, we characterize the adjoint of the Fr\'echet derivative of $F$ in the following
\begin{proposition}
Let $w \in \LoR \cap \LtR$ and let $F : \LtOD \to \LtOS$ be defined as in \eqref{def_F}. Then
    \begin{equation}
        (F'(f)^*g)(x,y) :=   R^*\kl{ \foh \E'\kl{\foh(Rf)(s,\theta)}  \int_{-1}^1  w(s-r)  g(r,\theta)\, dr }(x,y)   \,,
    \end{equation} 
where the adjoint of the Radon transform $R^*$ is understood w.r.t.\ the variables $s$ and $\theta$.
\end{proposition}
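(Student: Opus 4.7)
The plan is to compute $\langle F'(f)h, g\rangle_{\LtOS}$ starting from the definition \eqref{def_DF}, interchange the order of integration via Fubini, and recognize the result as $\langle Rh, k\rangle_{\LtOS}$ for the $k$ appearing in the statement; the adjoint identity $\langle Rh, k\rangle_{\LtOS} = \langle h, R^* k\rangle_{\LtOD}$ then delivers the formula by uniqueness of the adjoint.

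Concretely, plugging \eqref{def_DF} into the inner product produces a triple integral in $(s,r,\theta)$ whose integrand is
\begin{equation*}
\foh \, w(r-s)\, \E'\kl{\foh (Rf)(r,\theta)}  (Rh)(r,\theta) \, g(s,\theta).
\end{equation*}
Applying Fubini to pull the $s$-integration innermost and then swapping the dummy names $r \leftrightarrow s$, one arrives at
\begin{equation*}
\int_0^{2\pi} \int_\R (Rh)(s,\theta)\, k(s,\theta) \, ds\, d\theta, \qquad k(s,\theta) := \foh\, \E'\kl{\foh (Rf)(s,\theta)} \int_{-1}^1 w(s-r)\, g(r,\theta)\, dr.
\end{equation*}
Because $h \in \LtOD$ is implicitly supported in $\OD$, there holds $(Rh)(s,\theta) = 0$ for $\abs{s}>1$, so the outer $s$-integration collapses to $[-1,1]$ and the displayed expression is exactly $\langle Rh, k\rangle_{\LtOS}$. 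Invoking the adjoint of $R$ in the $(s,\theta)$ variables then yields the claim.

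The only non-routine step is justifying the interchange of integrals. Absolute integrability of the triple integral follows from the ingredients already collected in the previous proposition: $\abs{\E'} \leq \norm{\E'}_{\LiR}$, the Cauchy-Schwarz estimate $\int_{-1}^1 \abs{w(r-s)}\abs{g(s,\theta)} ds \leq \norm{w}_\LtR \norm{g(\cdot,\theta)}_{L_2([-1,1])}$, and the boundedness of $R : \LtOD \to \LtOS$, which together control $\norm{Rh}_\LtOS$. The very same bounds also show $k \in \LtOS$, so that $R^* k$ is a well-defined element of $\LtOD$. Beyond this bookkeeping I anticipate no genuine obstacle.
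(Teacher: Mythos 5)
Your proposal is correct and follows essentially the same route as the paper: compute $\spr{F'(f)h,g}_\LtOS$, swap the order of integration, recognize the inner factor (your $k$, the paper's $B(f)g$), use that $(Rh)(s,\theta)$ vanishes for $\abs{s}>1$ to restrict to $[-1,1]$, and pass to $\spr{h,R^*k}_\LtOD$. The only difference is that you explicitly justify the Fubini step and the membership $k\in\LtOS$, which the paper leaves implicit.
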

\begin{proof}
For any $g \in \LtOS$ it follows from \eqref{def_DF} that
    \begin{equation*}
    \begin{split}
        &\spr{F'(f)h,g}_\LtOS 
        =
        \int_{-1}^1 \int_0^{2\pi} (F'(f)h)(s,\theta) g(s,\theta) \, d\theta \, ds
        \\
        & \qquad =
        \int_{-1}^1 \int_0^{2\pi} \kl{\foh \int_{\R} w(r-s)  \E'\kl{\foh(Rf)(r,\theta)} (Rh)(r,\theta)    \, dr  }g(s,\theta) \, d\theta \, ds
        \\
        & \qquad =
        \int_{\R} \int_0^{2\pi}(Rh)(r,\theta) \kl{ \foh \E'\kl{\foh(Rf)(r,\theta)}  \int_{-1}^1 w(r-s)  g(s,\theta)\, ds} \, d\theta  \, dr \,.
    \end{split}
    \end{equation*}
Thus, defining the operator
    \begin{equation*}
        (B(f)g)(s,\theta) :=  \foh \E'\kl{\foh(Rf)(s,\theta)} \int_{-1}^1  w(s-r)  g(r,\theta)\, dr \,,
    \end{equation*}
we obtain
    \begin{equation*}
    \begin{split}
        \spr{F'(f)h,g}_\LtOS 
        &=
        \int_{\R} \int_0^{2\pi}(Rh)(s,\theta) (B(f)g)(s,\theta) \, d\theta  \, ds 
        \\
        & =
        \int_{-1}^1 \int_0^{2\pi}(Rh)(s,\theta) (B(f)g)(s,\theta) \, d\theta  \, ds 
        \\
        & = 
        \spr{Rh,B(f)g}_\LtOS = \spr{h,R^*B(f)g}_\LtOD
        \,,
    \end{split}
    \end{equation*}
which yields the assertion.
\end{proof}

Having derived the Fr\'echet derivative and its adjoint, we are now in the position to consider a solution approach for \eqref{Ff=g}. A popular iterative reconstruction approach is nonlinear Landweber iteration \cite{Engl_Hanke_Neubauer_1996,Kaltenbacher_Neubauer_Scherzer_2008}, defined by
    \begin{equation}\label{Landweber_nonlin}
        f_{k+1}^\delta = f_k^\delta + \gamma_k^\delta F'(f_k^\delta)^*\kl{g^\delta - F(f_k^\delta)} \,, 
    \end{equation}
where $\gamma_k^\delta$ is stepsize, either chosen as constant or e.g.\ as the steepest descent stepsize \cite{Scherzer_1996}:
    \begin{equation}\label{steepest_descent}
        \gamma_k^\delta := \frac{\norm{s_k^\delta}^2}{\norm{F'(f_k^\delta) s_k^\delta}^2} \,,
        \qquad
        s_k^\delta := F'(f_k^\delta)^*\kl{g^\delta - F(f_k^\delta)} \,.
    \end{equation}
Typically, the iteration is combined with the discrepancy principle \eqref{discrepancy_principle} as a stopping rule, which now determines the stopping index $k_*$ by
    \begin{equation*}
	    k_*:=\inf\Kl{k\in\mathbb{N} \, \vert \, \norm{F(f_k^\delta) - g^\delta} \leq \tau\delta} \,.
	\end{equation*}

After these considerations, we can now define a solution approach for Problem~\ref{prob_F}. Since for all $f\geq 0$ there holds $F(f) = G(f)$, it follows from \eqref{Gfi=g} that
    \begin{equation*}
        F(f)(\sitj) = g(\sitj)   \,.
    \end{equation*}
Hence, for solving Problem~\ref{prob_F} we propose to use the Landweber iteration \eqref{Landweber_nonlin} for the operator $F$ together with a discretization for the space $\LtOS$ based on collocation at the points $(\sitj)$, combined with either a constant or the steepest descent stepsize.

% Subsection - Reconstruction for Linear Model
\subsection{Reconstruction approaches for the linear models} \label{subsect_LinearReconstruction}

In this section we consider reconstruction approaches for Problems~\ref{prob_R_P} and \ref{prob_R_I}, which both amount to solving a linear system of the form
    \begin{equation}\label{R_si_theta=g}
        (Rf)(s_i,\theta_j) = g(s_i,\theta_j) \,.
    \end{equation}
Since this is a discrete version of the continuous Radon transform equation 
    \begin{equation}\label{Rf=g}
        (R f)(s,\theta) = g(s,\theta) \,,
    \end{equation}
reconstruction approaches based on methods for solving \eqref{Rf=g} suggest themselves. As always when dealing with inverse problems, special attention needs to be given to the fact that instead of $g$ one typically only has access to noisy data $g^\delta$, which are assumed to satisfy 
    \begin{equation*}
        \norm{g- g^\delta} \leq \delta \,,
    \end{equation*}
where $\delta$ denotes the noise level. Among the earliest approaches for dealing with this issue is the so-called filtered back-projection \cite{Natterer_2001,Louis_1989}. It combines the classic Radon inversion formula with a suitable filter, which acts as a regularization stabilizing the inversion. Another very prominent approach is Tikhonov regularization \cite{Engl_Hanke_Neubauer_1996}, which defines the approximation $f_\beta^\delta$ of the density $f$ as the minimizer of the Tikhonov functional
    \begin{equation*}
        f  \, \mapsto \, \norm{Rf- g^\delta}^2_\Lt + \beta \norm{f}^2_\Lt \,,
    \end{equation*}
where $\beta = \beta(\delta,\yd)$ is a regularization parameter. Many variations of this approach are possible, which allow to include different a-priori information on the density function $f$, for example knowledge on its sparsity with respect to a given basis.

Alternatively to Tikhonov regularisation, perhaps the most well-known iterative regularization method is Landweber iteration \cite{Engl_Hanke_Neubauer_1996}, whose iterates are defined by 
    \begin{equation*}
        f_{k+1}^\delta = f_{k}^\delta + \gamma R^*(g^\delta - R f_k^\delta) \,,
    \end{equation*}
where $\gamma$ is a stepsize parameter. In order to obtain a convergent regularization method, it has to be combined with a suitable stopping rule. The most prominent choice is the discrepancy principle, which determines a stopping index $k_*$ via
    \begin{equation}\label{discrepancy_principle}
	    k_*:=\inf\Kl{k\in\mathbb{N} \, \vert \, \norm{R f_k^\delta - g^\delta} \leq \tau\delta},
	\end{equation}
for some parameter $\tau > 1$. Similarly to Tikhonov regularisation, also Landweber iteration can be adapted in many different ways. Sparsity assumptions can be incorporated by adapting the iteration scheme to
    \begin{equation*}
        f_{k+1}^\delta = S \kl{f_{k}^\delta + \gamma R^*(g^\delta - R f_k^\delta) }\,,
    \end{equation*}
where $S$ is a shrinkage/thresholding operator, which gives rise to ISTA. This can be combined with different stepsizes and acceleration schemes, which for example gives rise to the well-known method known as FISTA.

A conceptually different approach for the solution of \eqref{R_si_theta=g} is contour tomography, which as the name suggests aims not at reconstructing the density function $f$ but its contours. On its most basic level, it amounts to a differentiation of the data, followed by a back-projection, the result of which is a function whose jumps are roughly equivalent to the jumps of the original density function $f$. For details, we refer to \cite{Louis_Maass_1993}.

In Section~\ref{sect_numerical_results}, we apply the reconstruction methods discussed above (filtered back-projection, Tikhonov regularisation, Landweber iteration, contour tomography) to obtain solutions to both Problems~\ref{prob_R_P} and \ref{prob_R_I}.

% % % % % % % % % % % % % % % %
% Section - Numerical Results %
% % % % % % % % % % % % % % % %
\section{Numerical results}\label{sect_numerical_results}

In this section, we present a number of numerical results demonstrating the usefulness of our different reconstruction approaches. These tests are based on experimental data obtained from THz measurements of the plastic sample depicted in Figure~\ref{fig_plasticsample} (left).

\begin{figure}[ht!]
    \centering
    \includegraphics[scale=0.41]{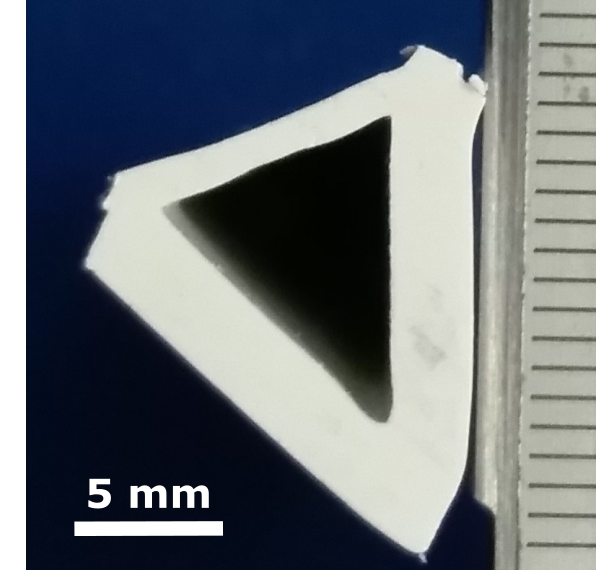}
    \qquad
    \includegraphics[viewport=45bp 210bp 545bp 630bp,clip,scale=0.43]{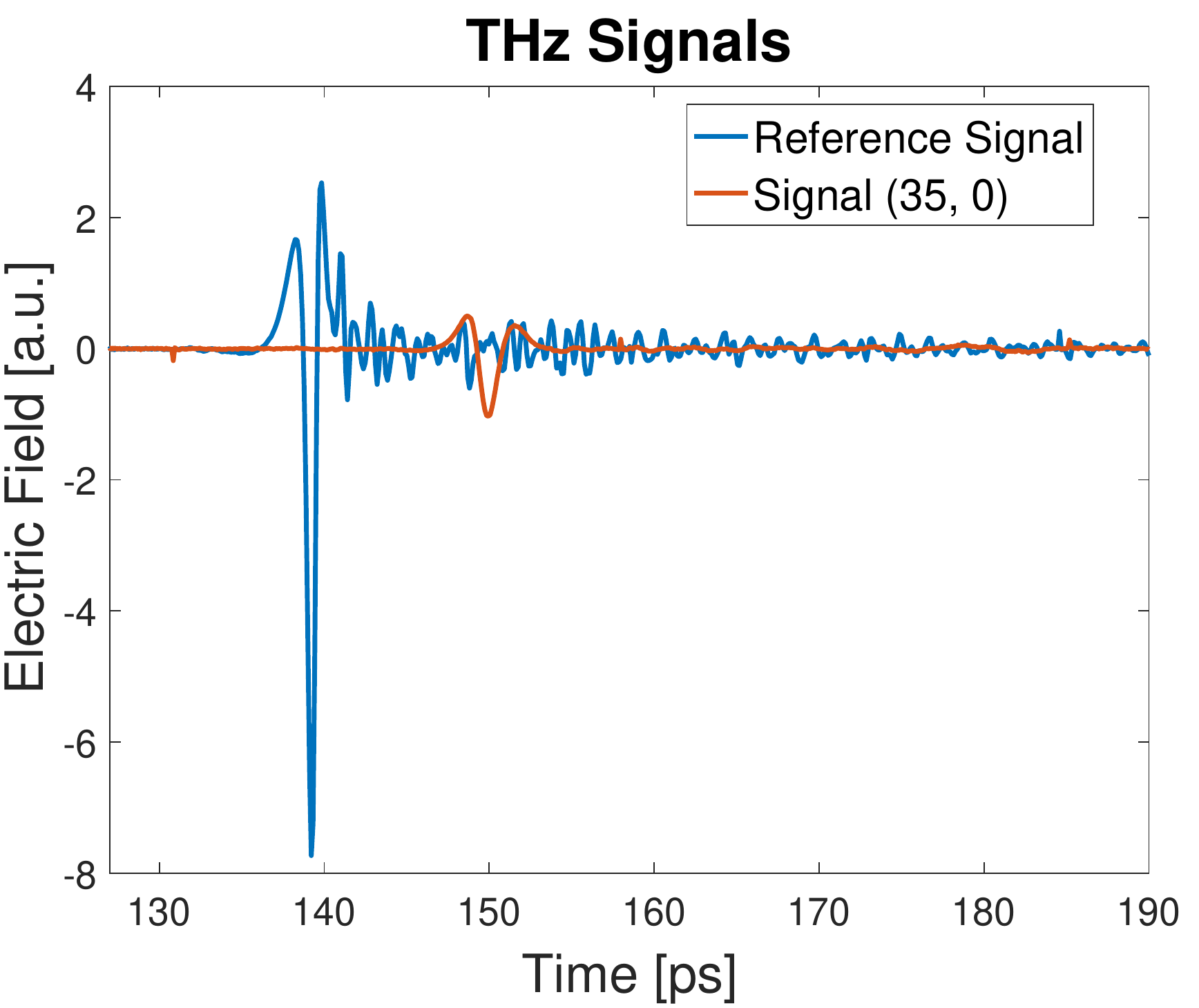}
    \caption{Triangular plastic sample (left), the measured electric field $E_\ij$ corresponding to $(\sitj) = (35,0)$ (right, blue), and the reference field $\Eref$ (right, orange).}
    \label{fig_plasticsample}
\end{figure}

The sample was scanned from $360$ uniformly distributed angles $\theta_j$ using $71$ equally spaced parallel beams $s_j$. An example of the measured electric field $E_\ij$ corresponding to $(\sitj) = (35,0)$ as well as the reference field $\Eref$ is depicted in Figure~\ref{fig_plasticsample} (right). Note that the reference field $\Eref$ was determined by averaging over consecutive measurements of a THz beam travelling through air only. 

As can already be seen from these examples, both $E_\ij$ and $\Eref$ contain a certain amount of measurement noise, which mostly affects the signal quality away from the main peak, see Figure~\ref{fig_mainpeak_extracted}. While this does not have a strong influence on the reconstructions obtained from intensity data $I_\ij$ and $\Iref$ (i.e.\ in Problem~\ref{prob_R_I}), it does affect the reconstruction when $P_\ij$ and $\Pref$ are used as data (i.e.\ in Problem~\ref{prob_F} and Problem~\ref{prob_R_P}). Fortunately, the most relevant information of the data is contained in the main peak, even though different sections of the beam might go trough different thicknesses of the material. Hence, we pre-processed the measured electric fields $E_\ij$ and $\Eref$ by extracting and considering only the main peak of each signal (see Figure~\ref{fig_mainpeak_extracted} for an example). 

\begin{figure}[ht!]
    \centering
    \includegraphics[viewport=170bp 480bp 310bp 560bp,clip, scale=1.5]{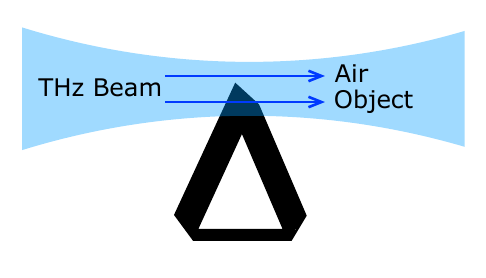}
    \includegraphics[viewport=30bp 210bp 540bp 630bp,clip, scale=0.4]{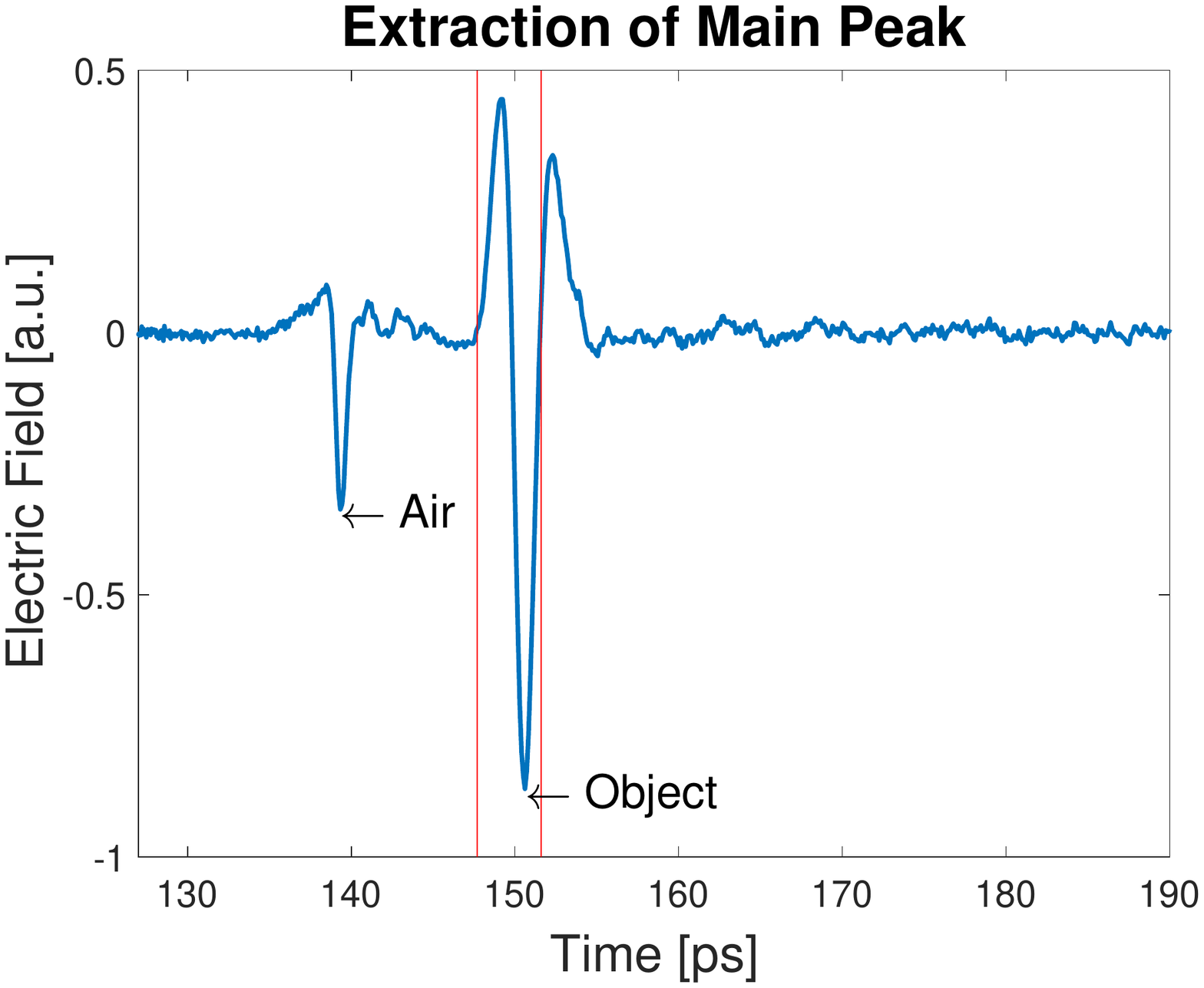}
    \caption{An example for the presence of multiple peaks in a THz signal. (left) The THz beam partially travels through air and the object. (right) This gives rise to two dominant peaks in the THz signal, one arising from the pulse that travelled through air and a second one that travelled through the object respectively. For our reconstructions only the largest peak (main peak) is used.}
    \label{fig_mainpeak_extracted}
\end{figure}

Another technicality when dealing with $P_\ij$ and $\Pref$ as data has to do with the fact that in theory, all $P_\ij$ should have the same sign as $\Pref$; compare to \eqref{eq_f_nonlin} and \eqref{eq_Radon_exp_P}. Unfortunately, this is not satisfied for measured $P_\ij$ due to noise and discretization  errors in the signal. Apart from the analytical inconsistencies which it introduces, this also leads to numerical problems when one has to compute the right-hand side $-2\log\kl{P_\ij/\Pref}$ of \eqref{eq_Radon_P} for solving Problem~\ref{prob_R_P}. However, note that mathematically both \eqref{eq_f_nonlin} and \eqref{eq_Radon_exp_P} remain the same if one first divides by $\Pref$ and then applies the absolute value on both sides. Hence, for obtaining the results presented below, we always used $\abs{P_\ij/\Pref}$ instead of $P_\ij/\Pref$ in order to circumvent these issues. 

When working with experimental data, it is advantageous to carry out a number of pre-processing steps on the quantities $\abs{P_\ij/\Pref}$ and $I_\ij/\Iref$ before computing the actual data $-2\log(\abs{P_\ij/\Pref})$ and $-\log(I_\ij/\Iref)$ for Problem~\ref{prob_R_P} and Problem~\ref{prob_R_I}, respectively. In our case, this pre-processing consists of a thresholding to remove unnaturally large values induced by measurement errors, a suitable scaling, and the application of a Gaussian filter to remove some high-frequency noise components. The resulting data (sinograms) are depicted in Figure~\ref{fig_sinogram_P_I}. The data $\abs{P_\ij/\Pref}$ corresponding to the nonlinear Problem~\ref{prob_F} is pre-processed in the same way and is depicted in Figure~\ref{fig_nonlinear_Landweber_exp} (left).

\begin{figure}[ht!]
    \centering
    \includegraphics[scale=0.5]{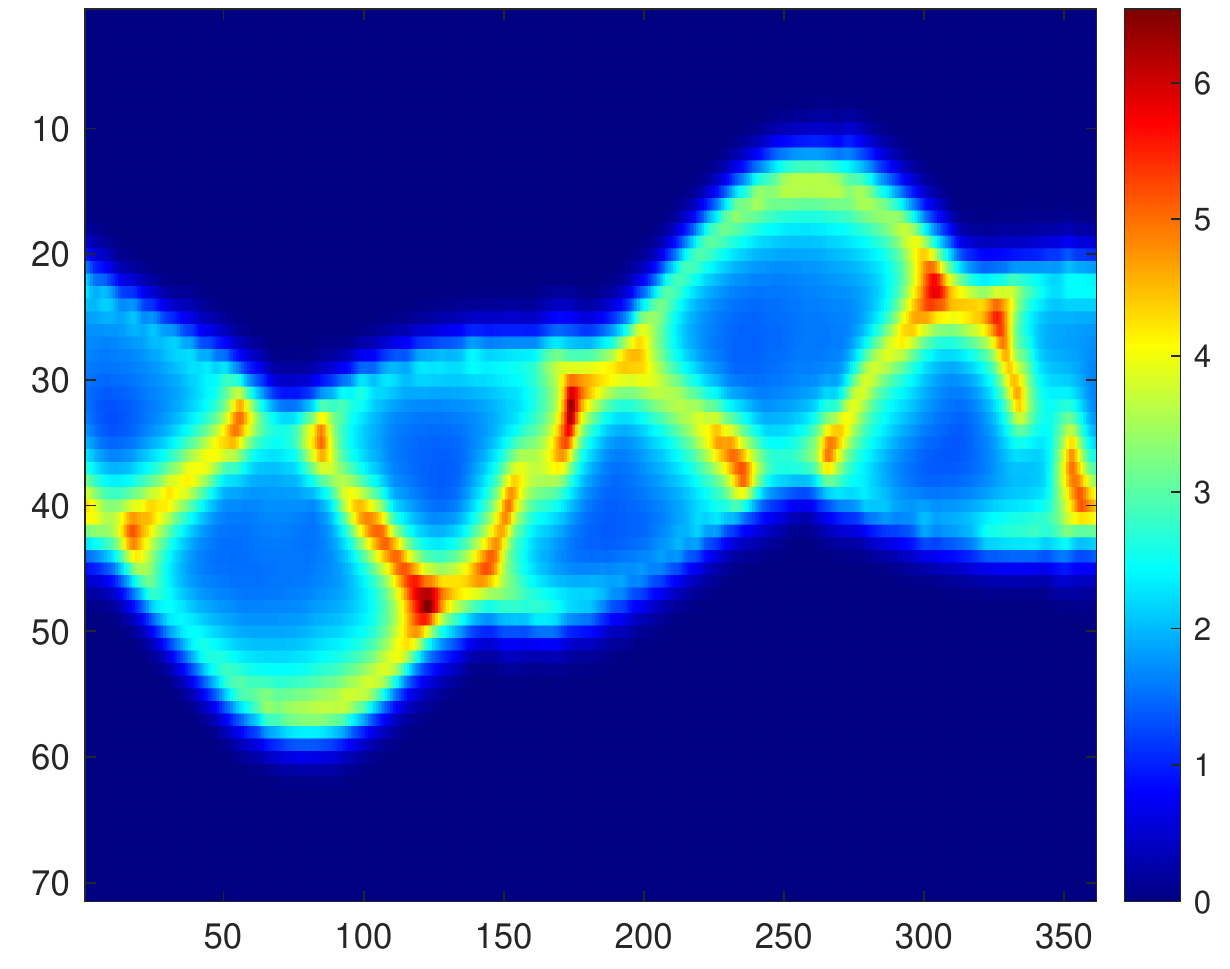}
    \qquad
    \includegraphics[scale=0.5]{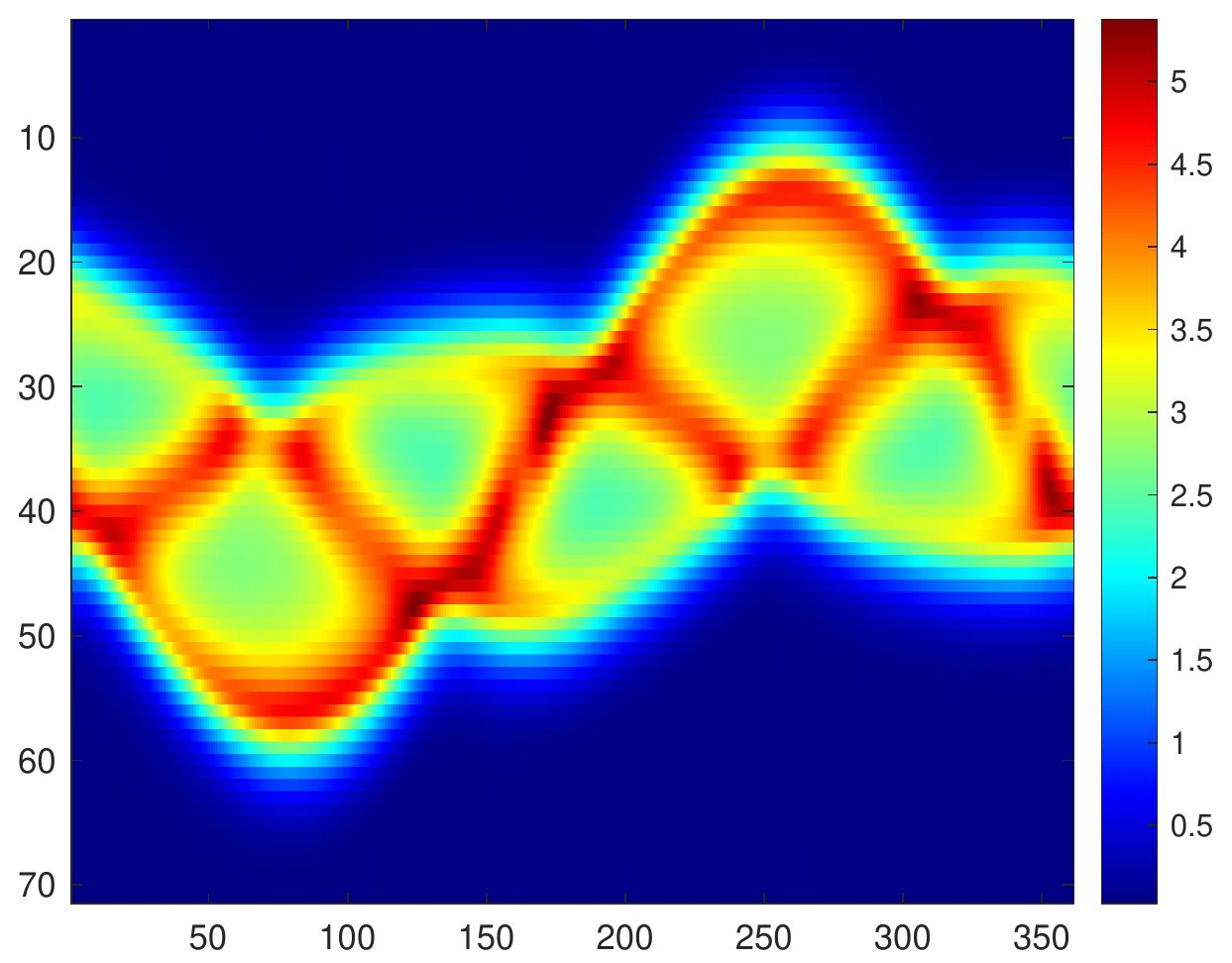}
    \caption{Pre-processed data $-2\log(\abs{P_\ij/\Pref})$ (left) and $-\log(I_\ij/\Iref)$ (right).}
    \label{fig_sinogram_P_I}
\end{figure}

Concerning the implementation of the different reconstruction approaches: the density function $f$ was discretized as a piecewise constant function on an $81 \times 81$ pixel grid, and for the assembly of the corresponding Radon transform matrix the AIR TOOLS II toolbox by Hansen and Jorgensen \cite{Hansen_Jorgensen_2017} was used. All computations were carried out in Matlab on a desktop computer with an Intel Xeon E-2136 processor with 3.30GHz and 16 GB RAM, and for the solution approach based on filtered back-projection, the Matlab function \emph{iradon} was used.

First, in order to check the validity of our general nonlinear model \eqref{eq_f_nonlin}, we set up a numerical representation of the density function of the triangular plastic sample shown in Figure~\ref{fig_plasticsample} (left). Applying the nonlinear operator $F$ defined in \eqref{def_F} to this, we obtain the simulated data depicted in Figure~\ref{fig_nonlinear_Landweber_sim} (left). Even though the numerical representation is only a coarse approximation of the real sample, and effects like scattering are not included in our model, the simulated data are in good agreement with the experimental data obtained from the actual THz measurement, which are shown in Figure~\ref{fig_nonlinear_Landweber_exp} (left). Hence, our nonlinear model appears to be a sufficiently accurate approximation of the actual underlying physical reality. The corresponding reconstructions via the nonlinear Landweber approach presented in Section~\ref{subsect_NonlinearReconstruction}, obtained after $200$ iterations from both the simulated and the experimental data, are depicted in Figure~\ref{fig_nonlinear_Landweber_sim} (right) and Figure~\ref{fig_nonlinear_Landweber_exp} (right), respectively. In both cases, we clearly recover the triangular structure of the sample, as well as the difference in thickness between the top edge of the triangle and its other two sides; compare also to Figure~\ref{fig_plasticsample} (left).

\begin{figure}[ht!]
    \centering
    \includegraphics[scale=0.5]{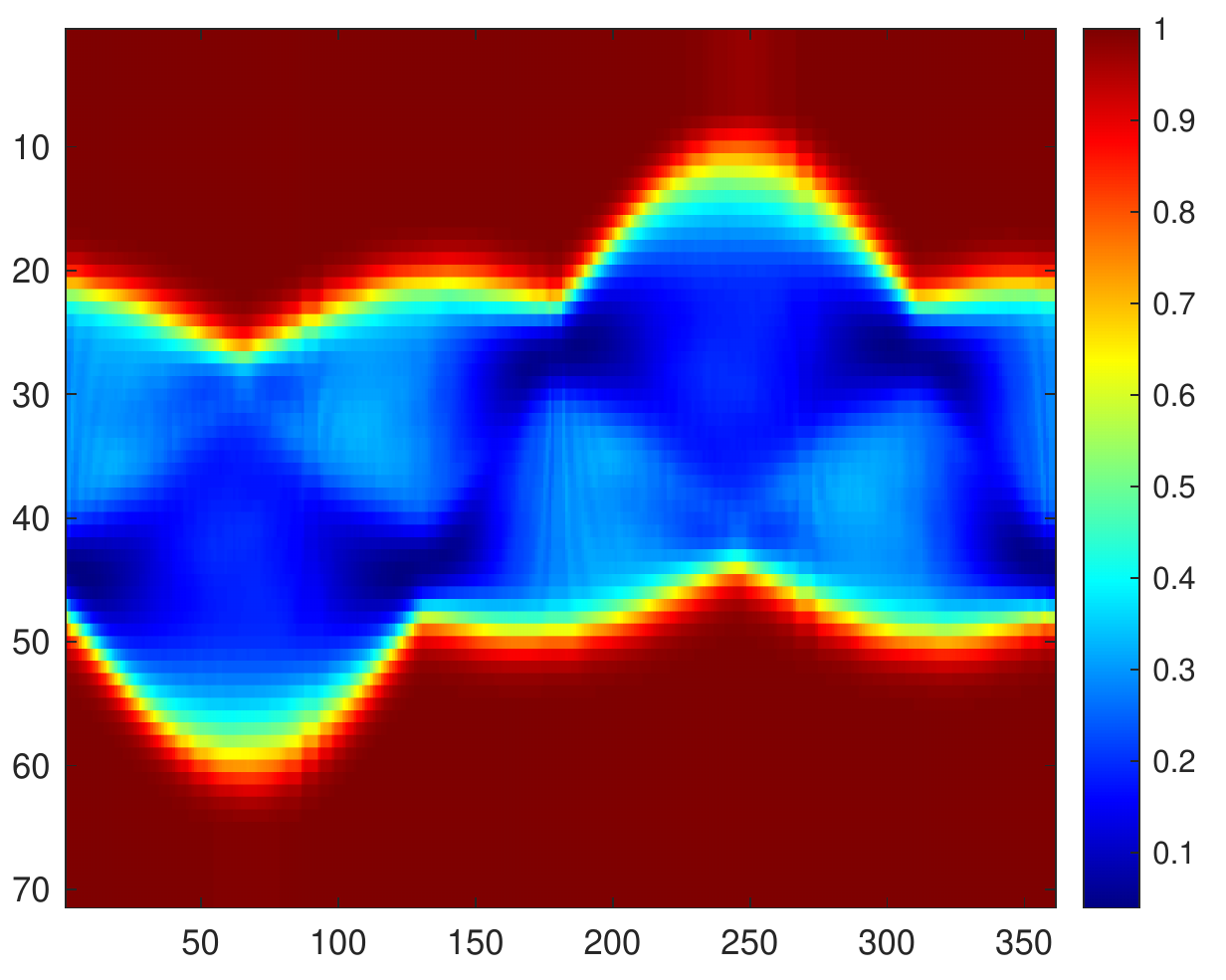}
    \qquad
    \includegraphics[scale=0.5]{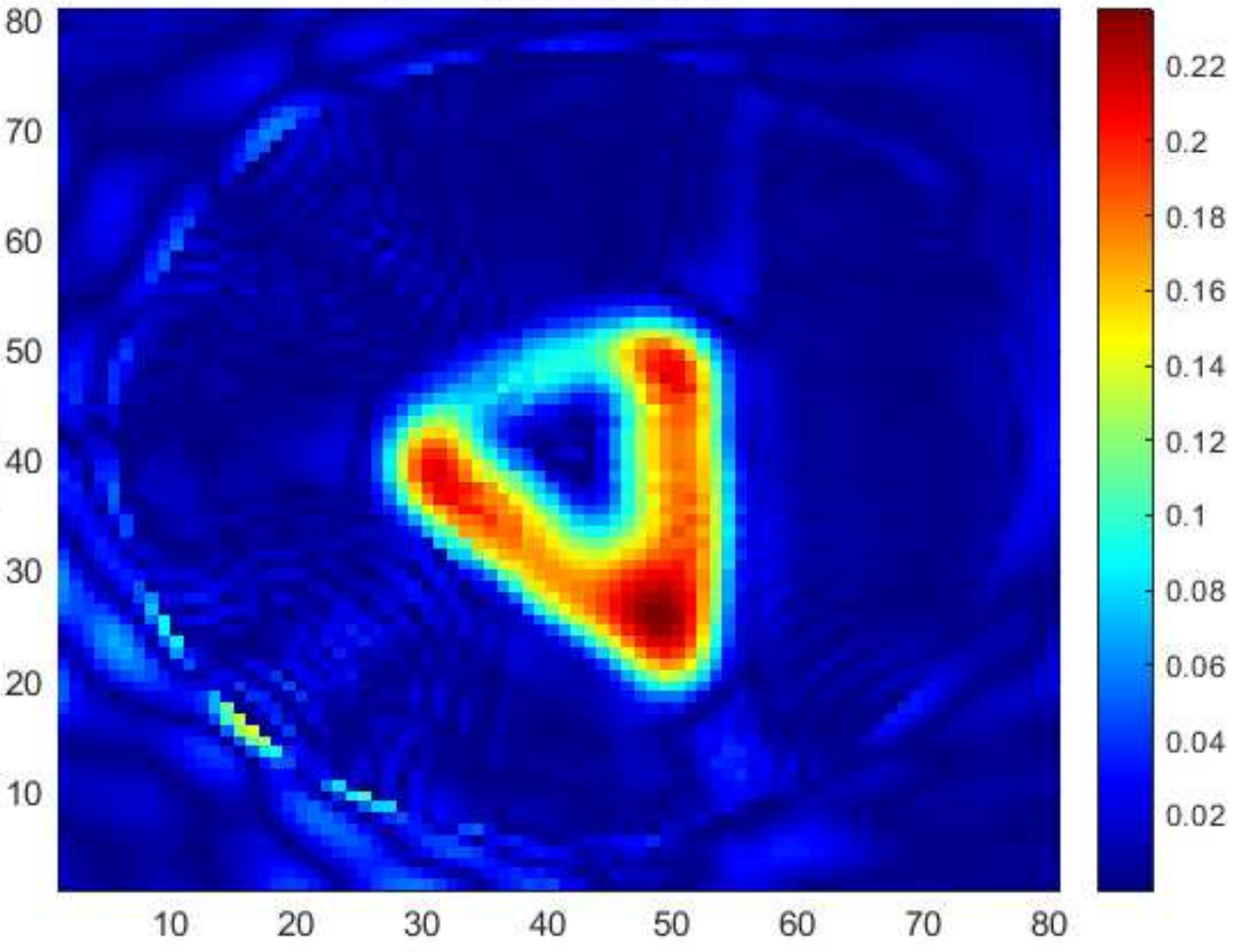}
    \caption{Simulated measurement data $\abs{P_\ij/\Pref}$ for the triangular plastic sample depicted in Figure~\ref{fig_plasticsample} (left), and the resulting reconstruction obtained via the nonlinear Landweber approach introduced in Section~\ref{subsect_NonlinearReconstruction} (right).}
    \label{fig_nonlinear_Landweber_sim}
\end{figure}

\begin{figure}[ht!]
    \centering
    \includegraphics[scale=0.5]{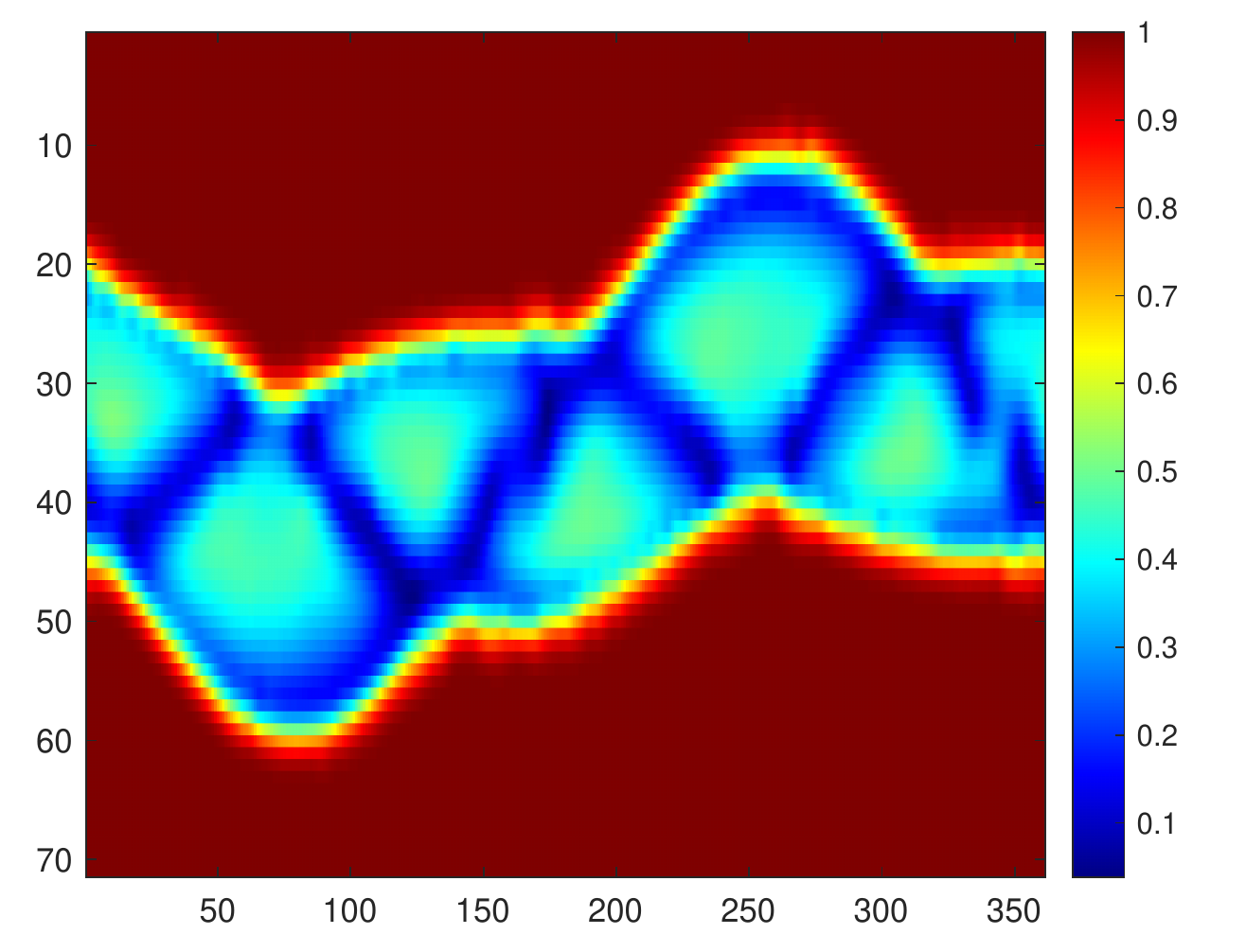}
    \qquad
    \includegraphics[scale=0.5]{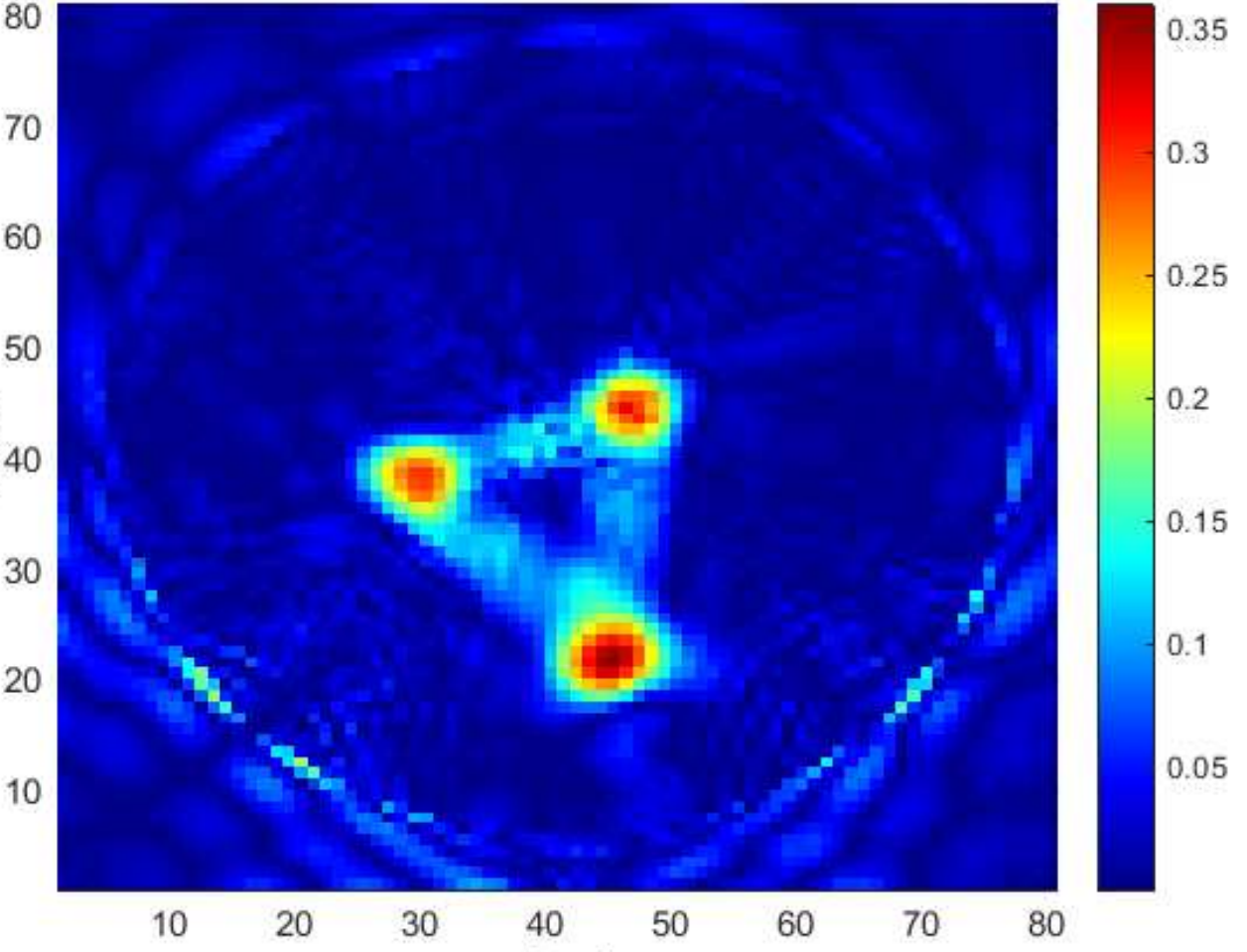}
    \caption{Pre-processed data $\abs{P_\ij/\Pref}$ obtained from THz measurements of the triangular plastic sample depicted in Figure~\ref{fig_plasticsample} (left), and the resulting reconstruction obtained via the nonlinear Landweber approach introduced in Section~\ref{subsect_NonlinearReconstruction} (right).}
    \label{fig_nonlinear_Landweber_exp}
\end{figure}

After considering the nonlinear Problem~\ref{prob_F} we now turn our attention to the linear Problems~\ref{prob_R_P} and \ref{prob_R_I}. In particular, we apply the different reconstruction approaches introduced in Section~\ref{subsect_LinearReconstruction}, i.e., filtered back-projection, contour tomography, Landweber iteration, and Tikhonov regularization, to the data $-2\log(\abs{P_\ij/\Pref})$ and $-\log(I_\ij/\Iref)$ depicted in Figure~\ref{fig_sinogram_P_I} in order to obtain solutions to  Problem~\ref{prob_R_P} and \ref{prob_R_I}, respectively. The resulting reconstructions are shown in Figure~\ref{fig_results_prob_R_P} and Figure~\ref{fig_results_prob_R_I}. Note that for Landweber iteration, a zero initial guess and $2000$ iterations were used, while for Tikhonov regularization we chose $\alpha = 500$ for the regularization parameter. In all reconstructions, the triangular structure of the sample is clearly recovered. Additionally, in the contour tomography reconstructions the outer and inner edges of the object are clearly visible. Note that all reconstructions feature more or less pronounced bumps in the corners of the triangle, which we suppose to be due to scattering effects not covered by our model. However, all reconstructions are sufficient to allow for a qualitative inspection of the internal structure of the measured plastic sample. Concerning the differences between the reconstructions, note that the reconstructions obtained via Problem~\ref{prob_R_P} feature somewhat sharper edges than those obtained via Problem~\ref{prob_R_I}.

\begin{figure}[ht!]
    \centering
    \includegraphics[trim = 0 0 0 18, clip, scale=0.5]{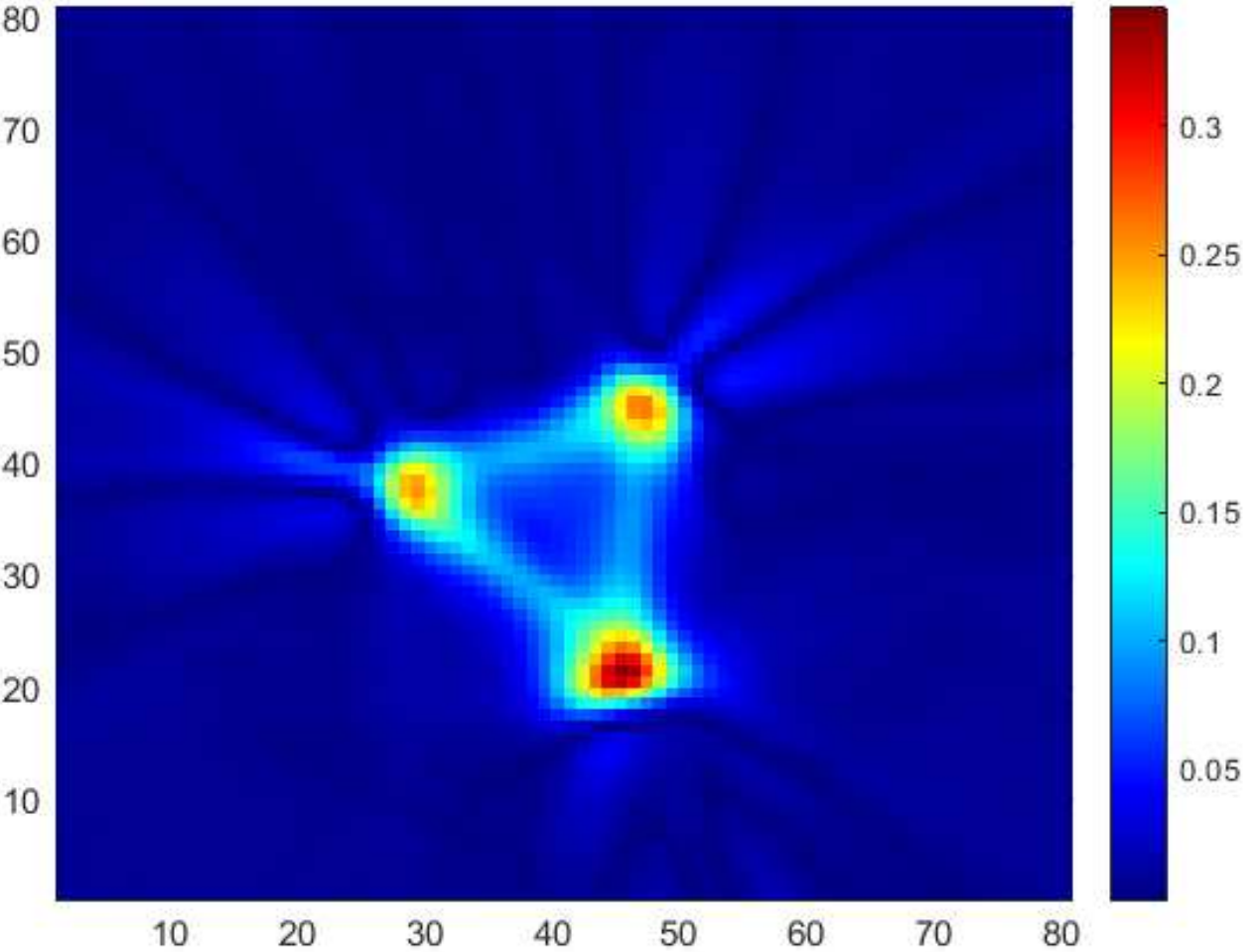}
    \qquad
    \includegraphics[trim = 0 0 0 17, clip, scale=0.5]{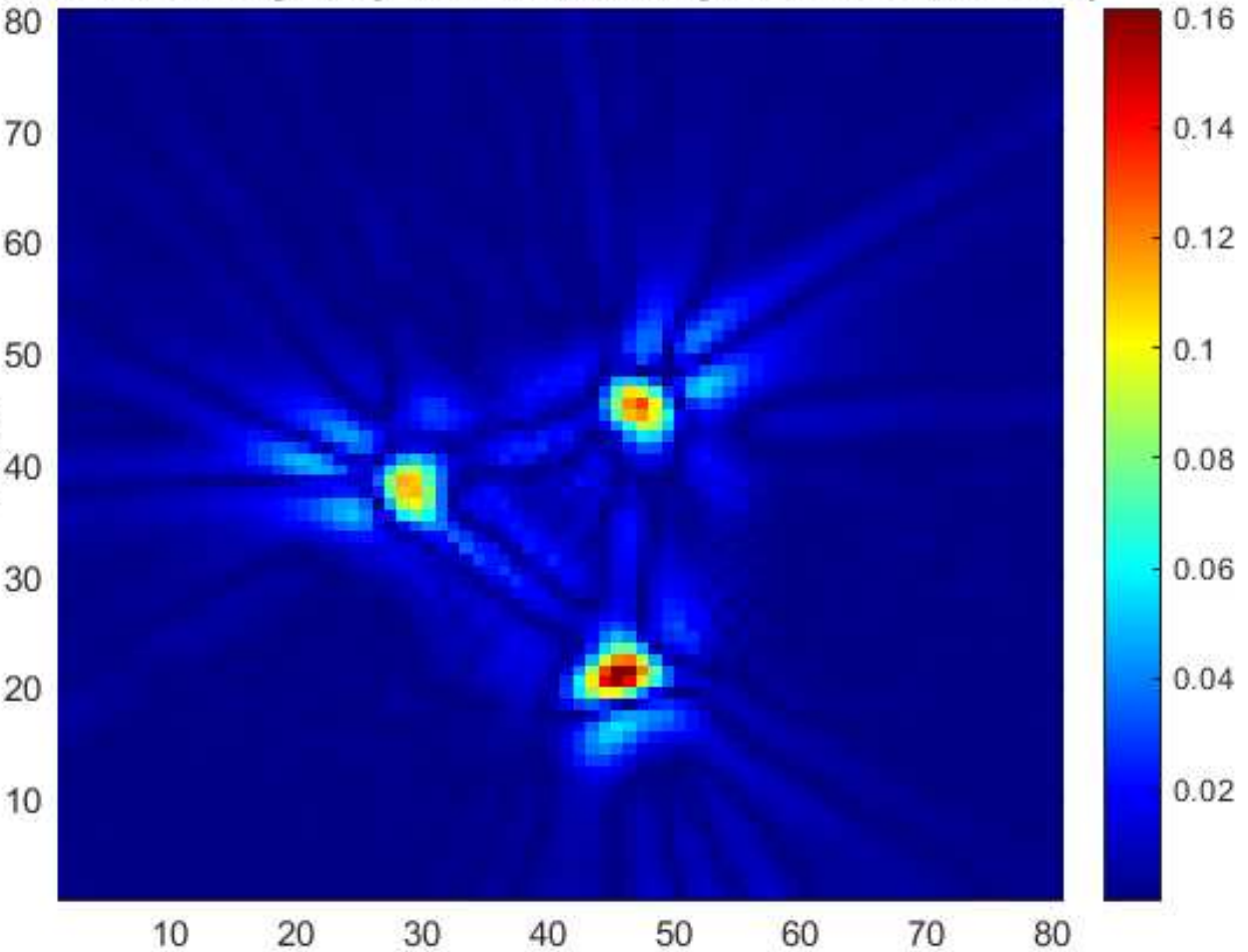}
    \\ \vspace{15pt}
    \includegraphics[trim = 0 0 0 21, clip, scale=0.5]{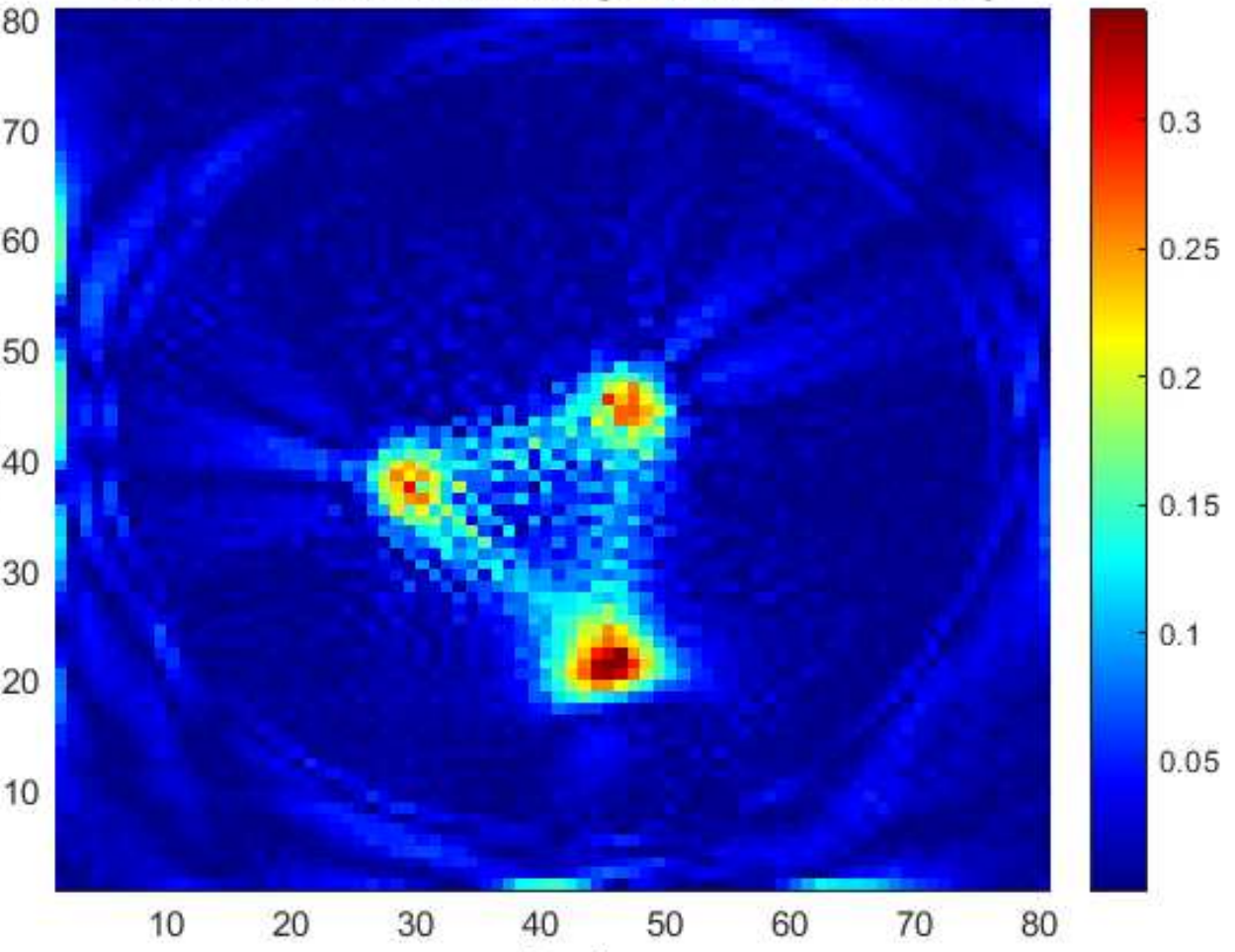}
    \qquad
    \includegraphics[trim = 0 0 0 17, clip, scale=0.5]{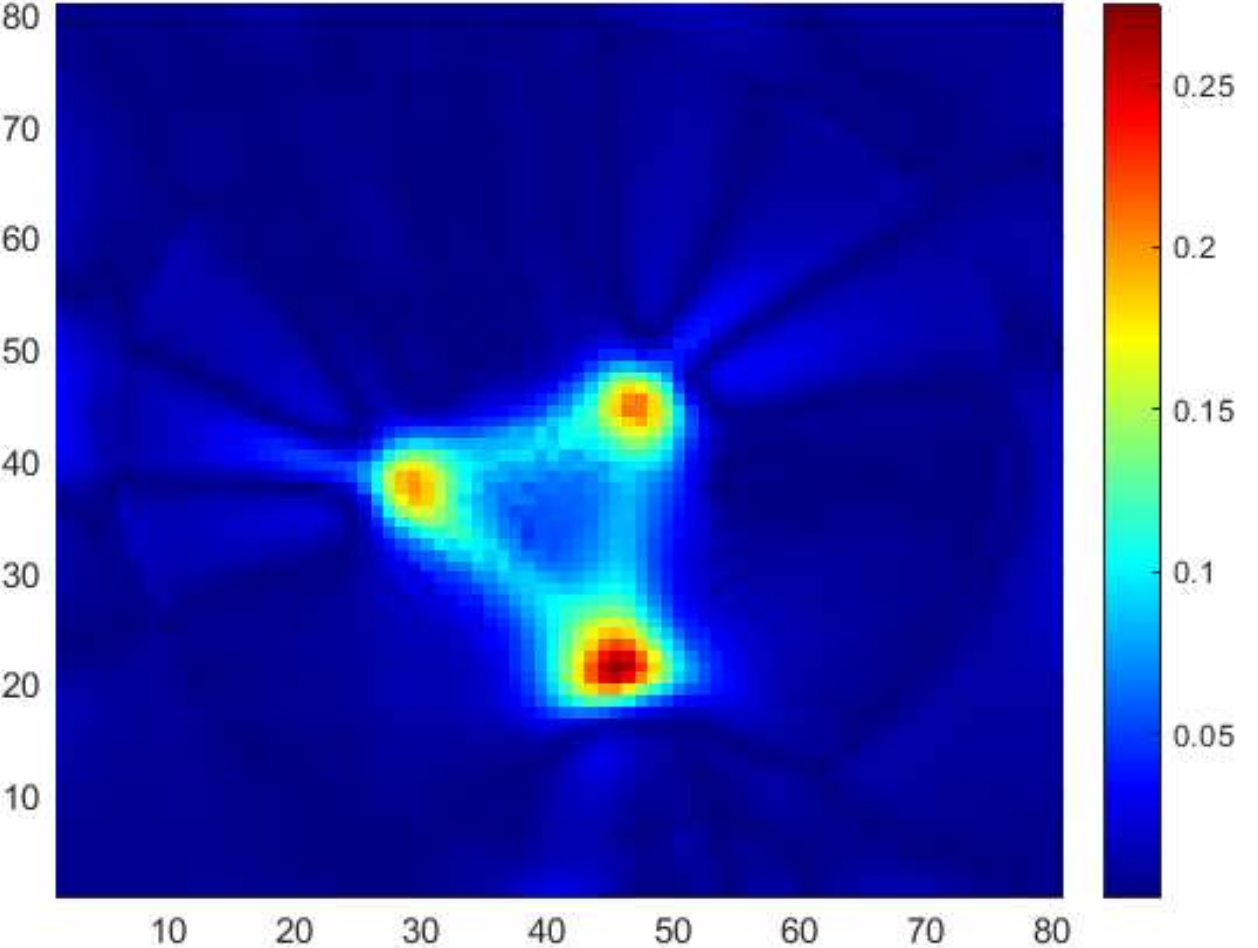}
    \caption{Reconstructions for Problem~\ref{prob_R_P} obtained from the data $-2\log(\abs{P_\ij/\Pref})$ depicted in Figure~\ref{fig_sinogram_P_I} (right) via the application of the following reconstruction methods introduced in Section~\ref{subsect_LinearReconstruction}: filtered back-projection (top left), contour tomography (top right), Landweber iteration (bottom left), Tikhonov regularization (bottom right).}
    \label{fig_results_prob_R_P}
\end{figure}

\begin{figure}[ht!]
    \centering
    \includegraphics[trim = 0 0 0 16, clip, scale=0.5]{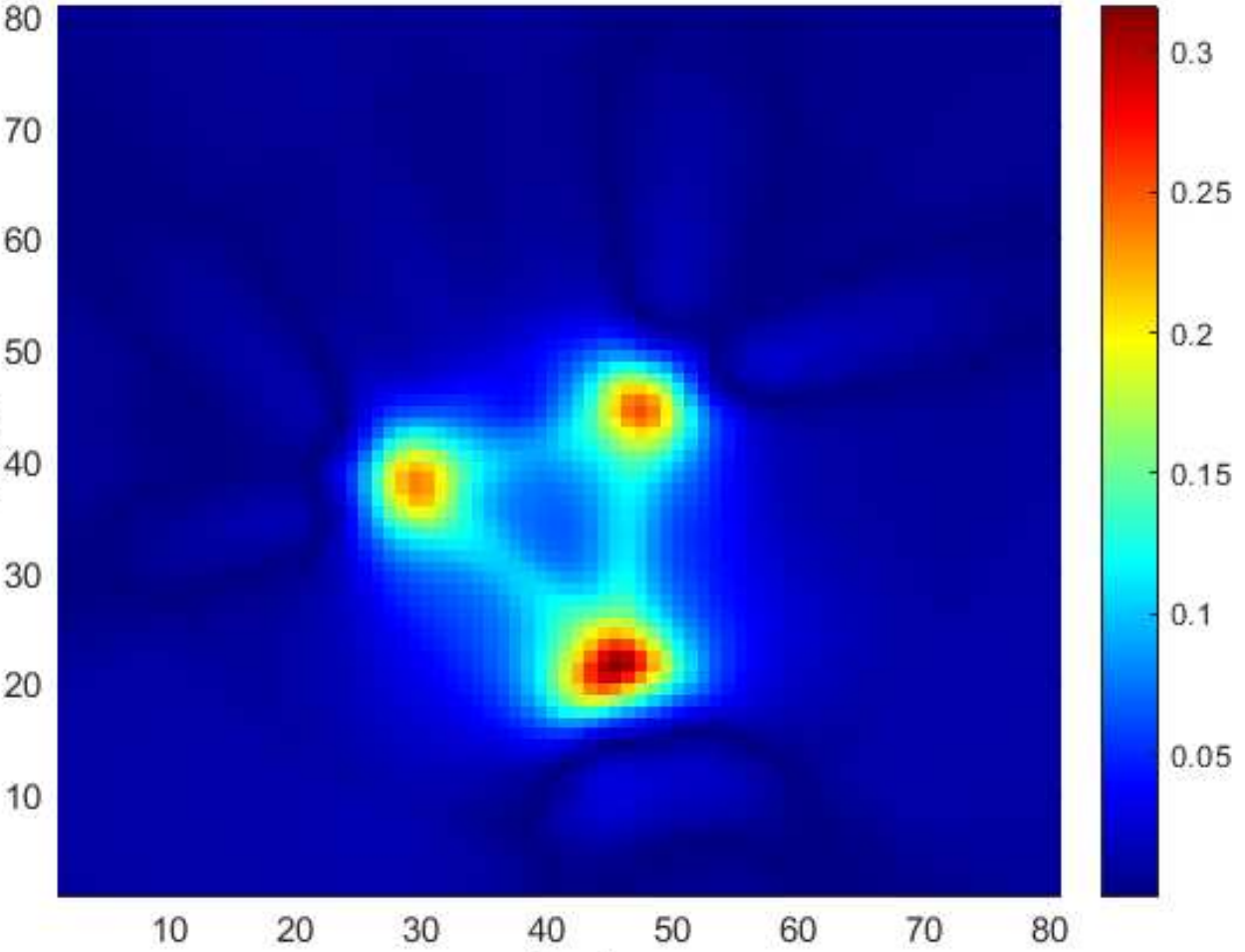}
    \qquad
    \includegraphics[trim = 0 0 0 17, clip, scale=0.5]{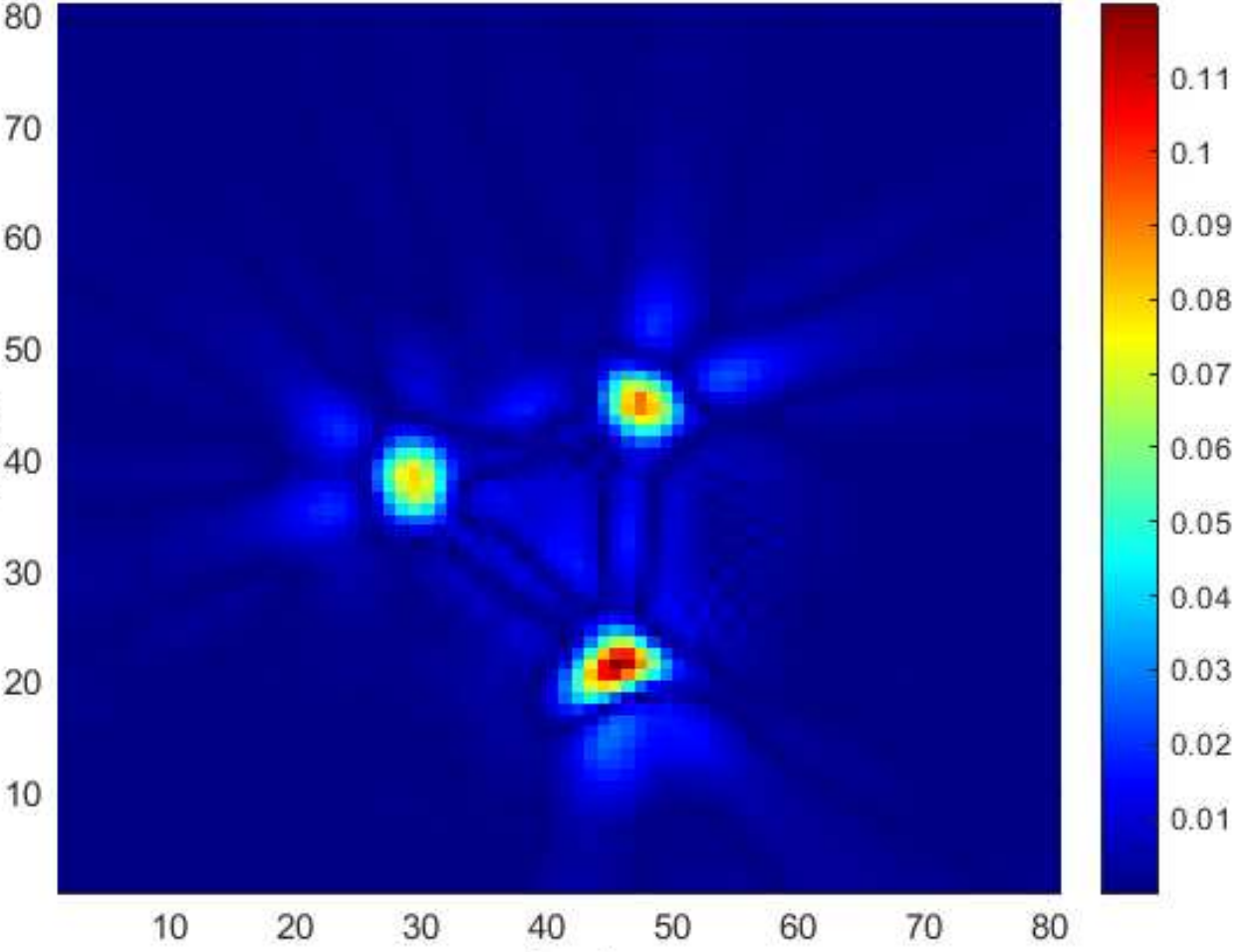}
    \\ \vspace{15pt}
    \includegraphics[trim = 0 0 0 19, clip, scale=0.5]{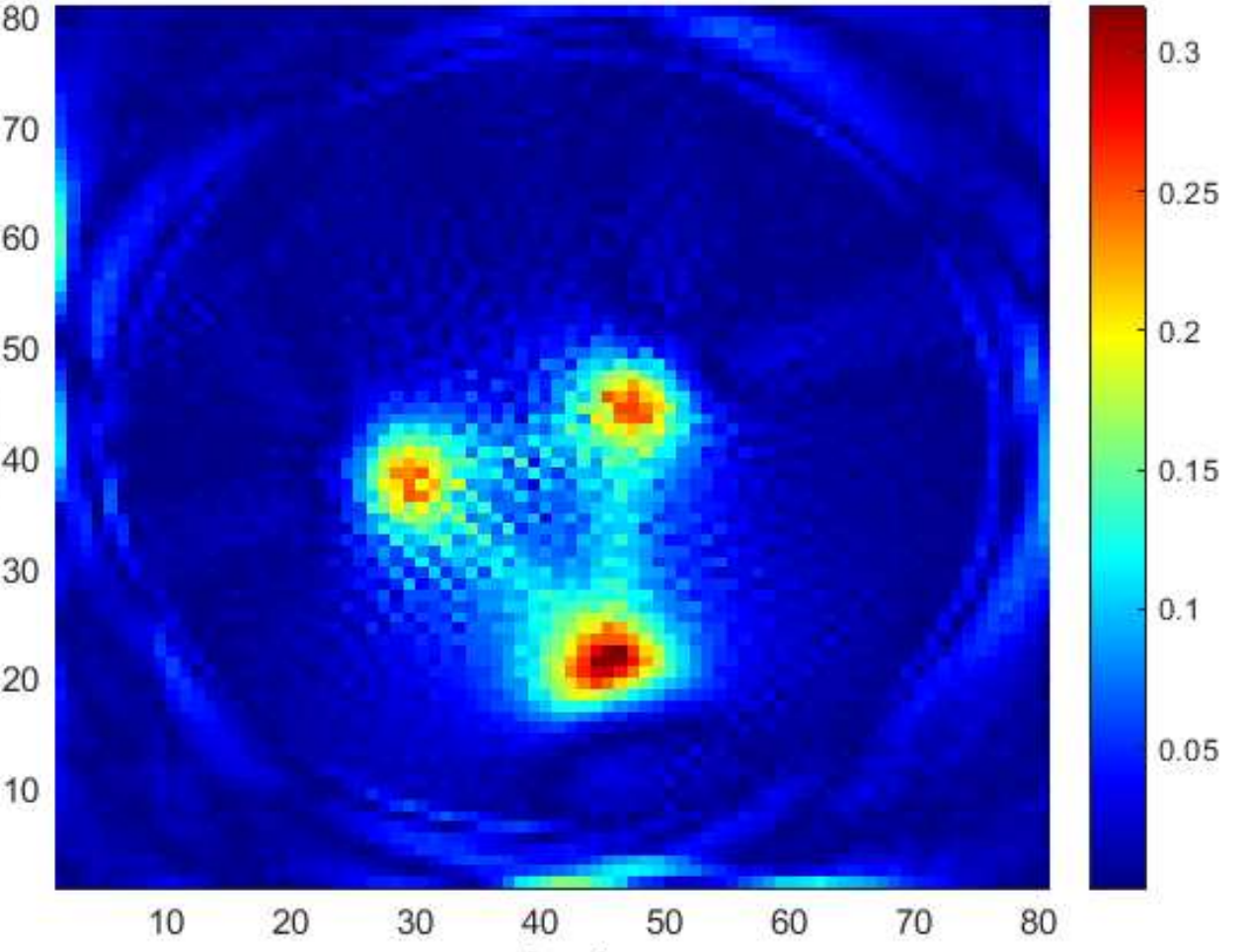}
    \qquad
    \includegraphics[trim = 0 0 0 17, clip, scale=0.5]{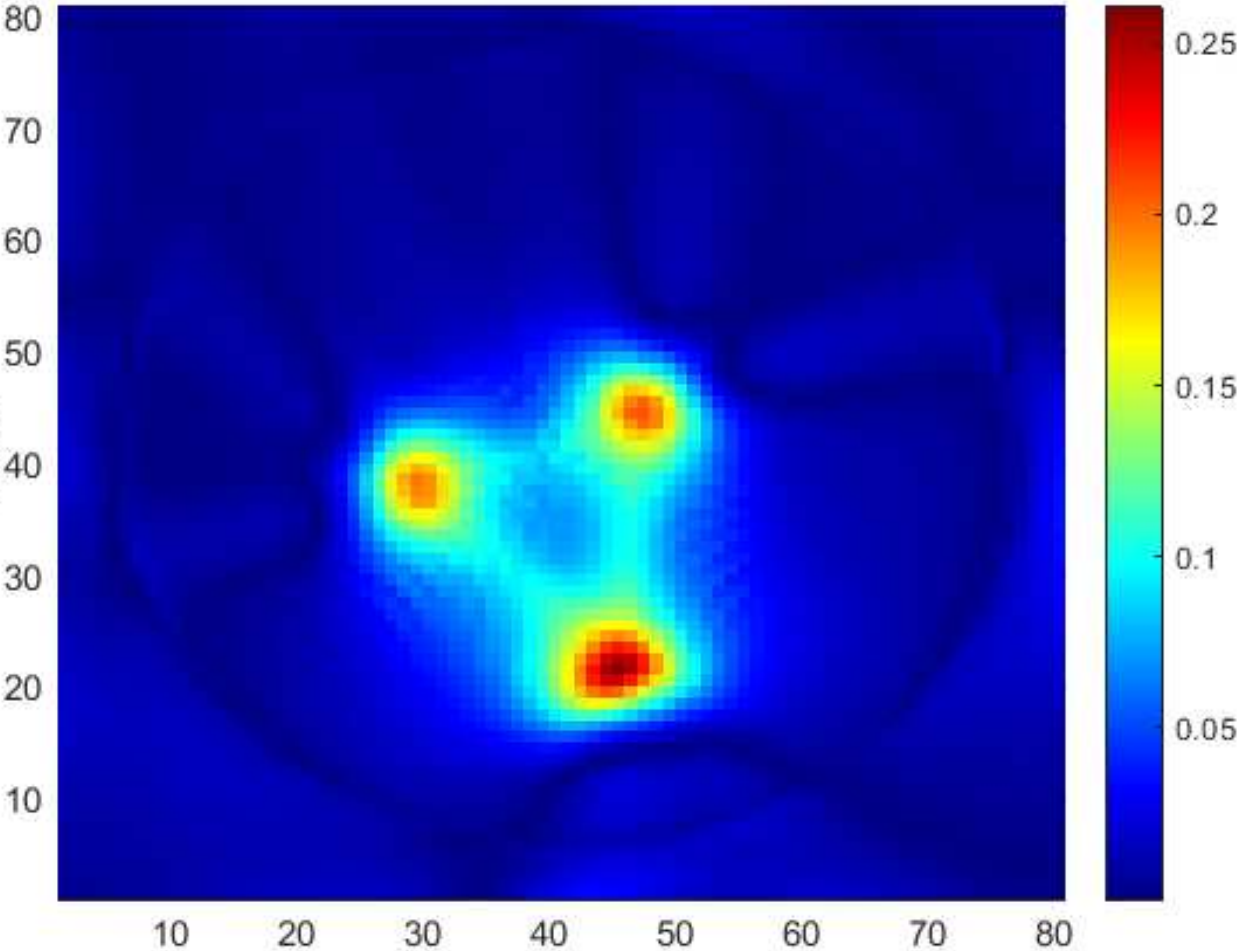} 
    \caption{Reconstructions for Problem~\ref{prob_R_I} obtained from the data $-\log(I_\ij/\Iref)$ depicted in Figure~\ref{fig_sinogram_P_I} (left) via the application of the following reconstruction methods introduced in Section~\ref{subsect_LinearReconstruction}: filtered back-projection (top left), contour tomography (top right), Landweber iteration (bottom left), Tikhonov regularization (bottom right).}
    \label{fig_results_prob_R_I}
\end{figure}

In summary, all of the presented reconstruction approaches were successful in recovering the triangular structure of the object. In addition, the reconstruction approach based on the nonlinear Problem~\ref{prob_F} is also able to resolve the difference in thickness between the top edge and the two side edges of the triangular sample. This indicates that the nonlinear model is closer to the physical reality, and that the corresponding nonlinear reconstruction approach can be beneficial in practice.

% % % % % % % % % % % % % % % % % % %
% Section - Conclusion and Outlook  %
% % % % % % % % % % % % % % % % % % %
\section{Conclusion}\label{sect_conclusion}

In this paper, we considered the imaging problem of THz tomography, with an emphasis on the use case of THz imaging on plastic profiles via a THz-TDS system. In particular, we derived a nonlinear mathematical model describing this problem, and considered a number of linear approximations revealing connections to computerized tomography. Furthermore, we proposed different reconstruction approaches, which were numerically tested on experimental data obtained from THz measurements of a plastic sample.

% % % % % % % % % % %
% Section - Support %
% % % % % % % % % % %
\section{Support}

This project has received funding from the ATTRACT project funded by the EC under Grant Agreement 777222. Furthermore, financial support was provided by the Austrian research funding association (FFG) under the scope of the COMET programme within the research project “Photonic Sensing for Smarter Processes (PSSP)” (contract number 871974). This program is promoted by BMK, BMDW, the federal state of Upper Austria and the federal state of Styria, represented by SFG. S. Hubmer and R. Ramlau were (partly) funded by the Austrian Science Fund (FWF): F6805-N36. A. Ploier was also (partly) funded by the Austrian Science Fund (FWF): W1214-N15, project DK8. 

% % % % % % % % %
% Bibliography  %
% % % % % % % % %
\bibliographystyle{plain}
{\small
\bibliography{mybib}
}

\end{document}